
\documentclass[a4paper,draft,reqno,12pt]{amsart}
\usepackage[utf8]{inputenc}
\usepackage[T1, T2A]{fontenc}
\usepackage[english]{babel}
\usepackage{amsmath}
\usepackage{amssymb}
\usepackage{amscd}
\usepackage{amsthm}
\usepackage{euscript}
\usepackage[all]{xy}
\newtheorem{proposition}{Proposition}

\newtheorem{lemma}{Lemma}
\newtheorem{theorem}{Theorem}

\newtheorem{corollary}{Corollary}
\theoremstyle{definition}

\newtheorem{example}{Example}
\theoremstyle{remark}
\newtheorem {remark}{Remark}

\DeclareMathOperator{\Spec}{Spec}

\DeclareMathOperator{\Aut}{Aut}

\DeclareMathOperator{\Cl}{Cl}

\DeclareMathOperator{\codim}{codim}

\DeclareMathOperator{\ppr}{pr}%

\DeclareMathOperator{\Ker}{Ker}

\def\GG{{\mathbb G}}
\def\FF{{\mathbb F}}

\def\KK{{\mathbb K}}
\def\TT{{\mathbb T}}
\def\ZZ{{\mathbb Z}}

\def\QQ{{\mathbb Q}}
\def\PP{{\mathbb P}}
\def\AA{{\mathbb A}}

\def\mm{\mathfrak{m}}

\renewcommand{\phi}{\varphi}
\renewcommand{\ge}{\geqslant}
\renewcommand{\le}{\leqslant}
\newcommand{\Zgtzero}{\mathbb{Z}_{>0}}
\newcommand{\Zgezero}{\mathbb{Z}_{\geqslant 0}}
\newcommand{\pa}{\partial}
\newcommand{\ti}{\tilde}

\hyphenation{bilinear}
\hyphenation{Theorem}

\sloppy
\textwidth=16.3cm
\oddsidemargin=0cm
\topmargin=0cm
\headheight=0cm
\headsep=1cm
\textheight=23.5cm
\evensidemargin=0cm

\begin{document}
\date{}
\title[Commutative algebraic monoids on $\AA^n$]{Commutative algebraic monoid structures \\ on affine spaces}
\author{Ivan Arzhantsev}
\address{National Research University Higher School of Economics, Faculty of Computer Science, Kochnovskiy Proezd 3, Moscow, 125319 Russia}
\email{arjantsev@hse.ru}

\author{Sergey Bragin}
\address{National Research University Higher School of Economics, Faculty of Computer Science, Kochnovskiy Proezd 3, Moscow, 125319 Russia}
\email{sbragin@hse.ru}

\author{Yulia Zaitseva}
\address{National Research University Higher School of Economics, Faculty of Computer Science, Kochnovskiy Proezd 3, Moscow, 125319 Russia}
\email{yuliazaitseva@gmail.com}

\subjclass[2010]{Primary 20M14, 20M32; \ Secondary 14R20, 20G15}

\keywords{Algebraic monoid, group embedding, grading, locally nilpotent derivation, local algebra, toric variety, Cox ring, Demazure root, additive action}

\thanks{The first and the third authors were supported by RSF grant 19-11-00172.}

\begin{abstract}
We study commutative associative polynomial operations $\AA^n\times\AA^n\to\AA^n$ with unit
on the affine space $\AA^n$ over an algebraically closed field of characteristic zero.
A~classification of such operations is obtained up to dimension 3. Several series
of operations are constructed in arbitrary dimension. Also we explore a connection between
commutative algebraic monoids on affine spaces and additive actions on toric varieties.
\end{abstract}

\maketitle


\section*{Introduction}
\label{sec0}

An (affine) algebraic monoid is an irreducible (affine) algebraic variety $S$ with an associative multiplication
$$
\mu \colon S\times S\to S,\quad (a,b)\mapsto ab,
$$
which is a morphism of algebraic varieties, and a unit element $e\in S$ such that ${ea=ae=a}$ for all $a\in S$. Examples of affine algebraic monoids are affine algebraic groups and multiplicative monoids of finite dimensional associative algebras with unit. The group of invertible elements $G(S)$ of an algebraic monoid $S$ is open in $S$. Moreover, $G(S)$ is an algebraic group.
By~\cite[Theorem~3]{Ri2}, every algebraic monoid $S$, whose group of invertible elements $G(S)$ is
an affine algebraic group, is an affine monoid. An affine algebraic monoid $S$ is called reductive if the group $G(S)$ is a reductive affine algebraic group.

By a group embedding we mean an irreducible affine variety $X$ with an open embedding $G\hookrightarrow X$ of an affine algebraic group $G$ such that both actions by left and right multiplications of $G$ on itself can be extended to $G$-actions on $X$. In other words, the variety $X$ is a $(G\times G)$-equivariant open embedding of the homogeneous space $(G\times G)/\Delta(G)$, where $\Delta(G)$ is the diagonal in $G\times G$.

Any affine monoid $S$ defines a group embedding $G(S)\hookrightarrow S$. The converse statement claims that for every group embedding $G\hookrightarrow S$ there exists a structure of an affine algebraic monoid on $S$ such that the group $G$ coincides with the group of invertible elements $G(S)$. This is proved in~\cite[Theorem~1]{Vi} under the assumption that $G$ is reductive and in~\cite[Proposition~1]{Ri1} for arbitrary~$G$.

Nowadays the theory of affine algebraic monoids and group embeddings is a rich and deeply developed area of mathematics lying at the intersection of algebra, algebraic geometry, combinatorics and representation theory; we refer to~\cite{Pu5,Re,Ri1,Vi} for general presentations.

\smallskip

In this paper we study commutative algebraic monoid structures on affine spaces. The ground field $\KK$ is an algebraically closed field of characteristic zero. The theory of commutative
reductive algebraic monoids, i.e. algebraic monoids with an algebraic torus as the group of invertible elements, is nothing but the theory of affine toric varieties; see~\cite{Neeb} for
more information on toric monoids. We concentrate on non-reductive commutative monoids.

There were several research projects in this direction. For example, all algebraic semigroup structures on the affine line are classified in~\cite[Theorem~1]{Yo}. In~\cite{Pu1}, algebraic semigroups on affine spaces are studied under the assumption that the multiplication is given by polynomials of degree at most 2.

In Section~\ref{sec2} we collect basic facts on commutative algebraic monoids. Let us define the rank of a commutative monoid $S$ as the dimension of the maximal torus of the group $G(S)$.
Section~\ref{sec3} contains a classification of commutative monoids on $\AA^n$ of rank $0$,
$n-1$ and $n$. In particular, this gives a classification of commutative monoid structures on $\AA^n$ for $n\le 2$ (Proposition~\ref{A1A2}).

In Section~\ref{sec4} we classify commutative monoid structures on $\AA^3$ (Theorem~\ref{A3}). After some reductions we come to the classification of pairs of commuting homogeneous locally nilpotent derivations of degree zero on a positively graded polynomial algebra $\KK[x_1,x_2,x_3]$.
This classification is obtained in Proposition~\ref{prop2der}.

Section~\ref{sec5} is devoted to a particular class of algebraic monoids on affine spaces. Namely, we consider multiplicative monoids of finite dimensional associative algebras with unit and call such monoids bilinear. Proposition~\ref{propmul} claims that an algebraic monoid $S$ on $\AA^n$ is bilinear if and only if the action of the group $G(S)\times G(S)$ on $\AA^n$ is linearizable. While for $n \le 3$ structures of a commutative monoid on $\AA^n$ are parameterized by discrete parameters (Theorem~\ref{A3}), starting from $n=7$ there are continuous families of different commutative bilinear monoid structures on $\AA^n$ (Proposition~\ref{pinfp}). Such monoids correspond to local algebras and thus have rank 1.

In Section~\ref{sec6} we come to a geometric construction related to commutative monoids. Let us recall that an additive action on a normal variety $X$ is a regular faithful action $\GG_a^s\times X\to X$ of a commutative unipotent group $\GG_a^s$ with an open orbit. We show that every additive action on a toric variety gives rise to a commutative monoid structure on an affine space. In~order to obtain the desired group embedding into affine space, one should lift the additive action to the spectrum of the Cox ring of the toric variety and to extend this action by the characteristic torus. The~corresponding commutative monoid is bilinear if and only if the toric variety is a product of projective spaces (Proposition~\ref{assact}). Commutative monoid structures coming from additive actions on Hirzebruch surfaces provide interesting examples of commutative monoids on $\AA^4$ (Example~\ref{exref}). Conversely, classification results from Section~\ref{sec4} allow to show that the weighted projective plane $\PP(1,b,c)$ admits precisely two additive actions (Proposition~\ref{WPP}). It is an interesting problem to classify additive actions on weighted projective spaces of arbitrary dimension. We do not know whether there is a "weighted version" of the Hassett-Tschinkel correspondence between additive actions on $\PP^n$ and local algebras of dimension $n+1$, see~\cite{HT}.

In the last section we briefly describe the well-known structure theory of affine monoids in the particular case of commutative monoids on affine spaces. This case is much simpler, and we give short proofs whenever it is possible. We show that every commutative monoid of rank $r$ on an affine space contains at least $2^r$ idempotents, and the number of idempotents is precisely $2^r$ provided $r\le 2$ (Proposition~\ref{us}). For commutative monoids of rank 1 with zero this implies that every element is either invertible or nilpotent.

The autors are grateful to the referee for a careful reading and to Sergey Dzhunusov who pointed our attention to an inaccuracy in Example~\ref{exref}. 


\section{Commutative algebraic monoids}
\label{sec2}

Let us begin with some basic definitions. We say that a monoid $S$ is a monoid with zero if there is an element $0\in S$ such that $0a=a0=0$ for any $a\in S$.
An ideal of a monoid $S$ is a nonempty subset $I\subseteq S$ such that $SIS\subseteq I$.
If there exists an ideal $K\subseteq S$ contained in all ideals of $S$, we say that $K$ is the
kernel of $S$. Every algebraic monoid $S$ contains a unique closed $G(S)\times G(S)$-orbit and this orbit is the kernel of $S$~\cite[Theorem~1]{Ri1}. In particular, $S$ is a monoid with zero if and only if its kernel consists of one element.

An affine monoid is called commutative if the multiplication $S \times S \to S$ is commutative, or, equivalently, the group of invertible elements $G(S)$ is a commutative algebraic group. Denote by $\GG_m$ and $\GG_a$ the multiplicative and the additive groups of the ground field $\KK$, respectively. It is well known that any connected commutative affine algebraic group is isomorphic to $\GG_m^r \times \GG_a^s$ for some non-negative integers $r$ and $s$; see \cite[Theorem 15.5]{Hum}. We say that $r$ is the rank of the group~$G$ and $s$ is the corank of $G$. The rank and the corank of a commutative monoid $S$ are defined as the rank and the corank
of the group $G(S)$.

The case $s = 0$ corresponds to $G(S)=\GG_m^n$, and affine monoids in this case are precisely
affine toric varieties; see~\cite{Neeb}. For more details on affine toric varieties, see~\cite[Chapter~1]{CLS}, \cite[Section~1.3]{Fu}.

\begin{lemma} \label{l1l}
Let $S$ be a commutative monoid on $\AA^n$ and $T$ be the maximal torus in $G(S)$. The following conditions are equivalent.
\begin{enumerate}
\item
The monoid $S$ has zero.
\item
The group $G(S)$ acts on $S$ with a unique fixed point.
\item
The categorical quotient $S/\!/T:=\Spec\KK[S]^T$ is a point.
\end{enumerate}
\end{lemma}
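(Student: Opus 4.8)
The plan is to show $(1)\Rightarrow(2)\Rightarrow(3)\Rightarrow(1)$. Throughout I will use the structural fact recalled in the text: the kernel $K\subseteq S$ is the unique closed $G(S)\times G(S)$-orbit, and $S$ has zero if and only if $K$ is a single point. Since $S$ is commutative, $G:=G(S)$ acts on $S$ and the diagonal action of $G\times G$ coincides (up to the identification via commutativity) with the $G$-action by multiplication, so the closed $(G\times G)$-orbits and the closed $G$-orbits agree; in particular $K$ is a single point exactly when $G$ has a unique fixed point in $S$.

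For $(1)\Leftrightarrow(2)$: if $S$ has zero $0\in S$, then $g\cdot 0=0$ for all $g\in G$, so $0$ is a fixed point; conversely a fixed point $p$ is a one-element $G$-orbit, hence closed, hence must be the kernel $K$, and a one-element kernel means $S$ has zero. The only thing to check carefully is \emph{uniqueness} of the fixed point in~(2): a fixed point is a closed orbit, and all closed orbits coincide with the unique closed orbit $K$, so there can be at most one. I would spell this out using $K$ as the universal minimal ideal.

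For $(1)\Leftrightarrow(3)$: the torus $T$ is the maximal torus of the commutative group $G=\GG_m^r\times\GG_a^s$, so $\KK[S]^T$ records precisely the $T$-invariant regular functions, and $\Spec\KK[S]^T$ is a point iff $\KK[S]^T=\KK$, i.e. the only $T$-invariants are constants. The key point is that for a commutative monoid, having zero is controlled by the torus part alone: the unipotent radical $\GG_a^s$ has no nonconstant invariants issue in the relevant direction, and a $G$-fixed point is the same as a point fixed by $T$ together with the unipotent part. I would argue that $S/\!/T$ being a point is equivalent to $T$ having a unique fixed point (the categorical quotient by a torus separates closed orbits, so it is a point iff there is a unique closed $T$-orbit, which for a linearizable/affine situation forces a unique $T$-fixed point at the "vertex"), and then that a unique $T$-fixed point combined with the unipotent action forces a genuine $G$-fixed point, hence zero by $(2)\Leftrightarrow(1)$.

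The main obstacle, and the step I would handle most carefully, is the passage between $T$-fixed points and $G$-fixed points in the implication $(3)\Rightarrow(1)$: one must rule out that the extra unipotent directions $\GG_a^s$ destroy the fixed point guaranteed by the torus quotient. I expect to resolve this by noting that $G$ is commutative, so $T$ and $\GG_a^s$ commute; the set $S^{T}$ of $T$-fixed points is then $\GG_a^s$-stable, and on the affine variety $S^{T}$ the group $\GG_a^s$ acts with the quotient $S/\!/T$ a point, which forces $S^T$ to be a single $\GG_a^s$-fixed point (a unipotent action on an affine variety with trivial categorical quotient has a fixed point, and here it must be the unique $T$-fixed point). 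That common fixed point is then fixed by all of $G$, giving zero and closing the cycle.
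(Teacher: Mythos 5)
Your treatment of $(1)\Leftrightarrow(2)$ via the kernel (the unique closed $G(S)$-orbit, with a fixed point being a closed orbit and hence equal to the kernel) is correct and is essentially the paper's argument. The problems are in the bridge between condition (3) and the fixed-point conditions, where there are two genuine gaps. First, your claim that the unique closed $T$-orbit ``for a linearizable/affine situation'' must be a fixed point is exactly the nontrivial point, and your justification is circular: linearizability of the $T$-action is the paper's Lemma~\ref{l3l}, which is deduced from Lemma~\ref{l2l}, which in turn rests on the very lemma you are proving; moreover the linearization theorems you would invoke presuppose that the unique closed orbit is a fixed point. In general a unique closed $T$-orbit on an affine variety need \emph{not} be a fixed point (a torus acting on itself is the basic example), so something specific to $S\cong\AA^n$ must be used. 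The paper fills this in: the closed orbit $\mathcal{O}$ is a quotient torus of $T$, by \cite[Theorem~6.7]{PV} the variety $S$ is a homogeneous fiber bundle over $\mathcal{O}$, and pulling back the invertible functions generating $\KK[\mathcal{O}]$ to $S=\AA^n$, where all invertible regular functions are constant, forces $\mathcal{O}$ to be a point. Nothing in your sketch plays the role of this fiber-bundle and invertible-functions argument.

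Second, your route from (1)/(2) to (3) rests on the implication ``unique $T$-fixed point $\Rightarrow$ $S/\!/T$ is a point,'' which is false as a general statement about torus actions: for $\GG_m$ acting on $\AA^2$ by $t\cdot(x,y)=(tx,t^{-1}y)$ the origin is the unique fixed point, yet $\KK[x,y]^{\GG_m}=\KK[xy]$ and $S/\!/T\cong\AA^1$. (Your parenthetical ``is a point iff there is a unique closed $T$-orbit'' is fine; it is the passage between closed orbits and fixed points that fails in this direction.) Here the monoid structure must enter, and the paper uses it: if $\dim S/\!/T>0$, the unipotent part $\GG_a^s$ of $G(S)$ commutes with $T$, so its action descends to $S/\!/T$ with an open orbit; since orbits of unipotent groups on affine varieties are closed, this descended action is transitive, contradicting the fact that the image of the $G(S)$-fixed point is a $\GG_a^s$-fixed point of $S/\!/T$. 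Your proposal contains no substitute for this contradiction argument. A minor further slip: your claim that ``a unipotent action on an affine variety with trivial categorical quotient has a fixed point'' is wrong --- trivial invariants force transitivity, not a fixed point --- though it is harmless in your intended use, since once $S^T$ is known to be a single point its $\GG_a^s$-stability immediately yields the $G(S)$-fixed point, as you correctly note.
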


\begin{proof}
Conditions~(1) and~(2) are equivalent for any algebraic monoid $S$. Indeed, the group $G(S)$ acts on $S$ with a unique closed orbit, and $S$ is a monoid with zero if and only if this orbit is a point.

Condition~(3) means that the algebra of invariants $\KK[S]^T$ consists of constant functions or, equivalently, there is a unique closed $T$-orbit $\mathcal{O}$ in $S$. Clearly, the orbit $\mathcal{O}$ is $G(S)$-invariant. Moreover, $\mathcal{O}$ is a factor group of $T$, thus it is a torus. By~\cite[Theorem~6.7]{PV}, the variety $S$ is a homogeneous fiber bundle over $\mathcal{O}$. Since the algebra $\KK[\mathcal{O}]$ is generated by invertible functions and
all invertible functions in $\KK[S] = \KK[x_1, \ldots, x_n]$ are constant, we conclude that $\mathcal{O}$ is a point. This implies condition~(2).

Conversely, assume that $G(S)$ acts on $S$ with a unique fixed point $p$, and the quotient $S/\!/T$ has positive dimension. Consider the quotient morphism $\pi\colon S\to S/\!/T$ induced by the inclusion $\KK[S]^T\subseteq\KK[S]$. The unipotent part $\GG_a^s$ of the group $G(S)$ commutes with the torus $T$, and its action on $S$ descends to an action on $S/\!/T$ with an open orbit.
Since all orbits of an action of a unipotent group on an affine variety are closed~\cite[Section~1.3]{PV}, the action of $\GG_a^s$ on $S/\!/T$ is transitive.
We conclude that the group $\GG_a^s$ can move the point $p$ to any fiber of the map $\pi$, a contradiction.
\end{proof}

The next lemma is an analogue of a decomposition for reductive monoids \cite[Corollary~of~Theorem~2]{Vi}.

\begin{lemma}\label{l2l}
Every commutative monoid $S$ on $\AA^n$ is isomorphic to a direct product of monoids  $\GG_a^k\times S_0$, where $S_0$ is a commutative monoid with zero on $\AA^{n-k}$.
\end{lemma}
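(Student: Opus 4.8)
The plan is to decompose the unipotent part of $G(S)$ using the action on the categorical torus-quotient $S /\!/ T$, and then factor off a $\GG_a^k$-direct summand corresponding to the "free" directions that the group cannot reach. Concretely, let $G(S) \cong \TT \times \GG_a^s$ with $\TT = T$ the maximal torus and $\GG_a^s$ the unipotent part, and consider the quotient morphism $\pi \colon S \to S /\!/ T = \Spec \KK[S]^T$. As in the proof of Lemma~\ref{l1l}, the commuting action of $\GG_a^s$ descends to $S /\!/ T$, where it acts with an open orbit; since unipotent orbits on affine varieties are closed, this open orbit is all of $S /\!/ T$, so $S /\!/ T$ is a homogeneous space $\GG_a^s / H$ for some closed subgroup $H$. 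As a homogeneous space of a commutative unipotent group it is itself isomorphic to some affine space $\GG_a^k$.

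**Next I would** use this quotient structure to split off the $\GG_a^k$ factor. The idea is that $S /\!/ T \cong \GG_a^k$ carries a transitive $\GG_a^s$-action, and I want to lift this to a decomposition $S \cong \GG_a^k \times S_0$, where $S_0 = \pi^{-1}(o)$ is the fiber over the image $o$ of the unit $e$ (equivalently, the closure of the subgroup $\TT \times H$ acting on $e$). The key algebraic point is that $\KK[S]^T \cong \KK[t_1, \dots, t_k]$ is a polynomial subalgebra of $\KK[S] = \KK[x_1, \dots, x_n]$ consisting of $\TT$-invariants, and these $t_i$ can be taken to be coordinates on the $\GG_a^k$-factor. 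I would verify that the $t_i$ are part of a coordinate system on $\AA^n$ and that the complementary fiber $S_0$ is itself an affine space $\AA^{n-k}$ carrying an induced commutative monoid structure whose torus is still $T$ and whose unipotent part is $H$; the product monoid structure on $\GG_a^k \times S_0$ recovers that of $S$ because the $\GG_a^k$-coordinates are monoid homomorphisms to $(\GG_a, +)$ by construction (being $T$-invariant additive characters of the unipotent action).

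**Finally I would** check that $S_0$ is a commutative monoid \emph{with zero}. For this I would apply Lemma~\ref{l1l} to $S_0$: by construction the torus-quotient $S_0 /\!/ T$ is a single point (we have quotiented out exactly the positive-dimensional part of $S /\!/ T$), so condition~(3) of Lemma~\ref{l1l} holds and hence $S_0$ has zero. That $S_0$ is again an affine space of dimension $n-k$ follows from the fiber being a coordinate subspace once the $t_i$ are chosen as coordinates.

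**The main obstacle** I anticipate is the step showing that the $T$-invariant functions $t_1, \dots, t_k$ generating $\KK[S]^T$ can be chosen to be \emph{polynomial coordinates} on $\AA^n$ complementing a coordinate system on the fiber $S_0 \cong \AA^{n-k}$, i.e. that the quotient morphism $\pi \colon \AA^n \to \AA^k$ is a trivial bundle realizing the direct product splitting rather than merely a surjection with affine-space fibers. This is where the interplay between the monoid structure (forcing the $t_i$ to be additive homomorphisms, hence genuinely linear-like coordinates on the $\GG_a^k$ part) and the polynomial algebra structure of $\KK[S]$ must be used carefully; the cited homogeneous fiber bundle result \cite[Theorem~6.7]{PV} should furnish the product structure $S \cong (\GG_a^s / H) \times S_0$ as varieties, and it remains to promote this to an isomorphism of monoids and to confirm that the base $\GG_a^s / H \cong \GG_a^k$ is exactly the $\GG_a$-power factor.
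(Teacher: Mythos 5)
Your overall strategy coincides with the paper's (pass to the quotient $\pi\colon S\to S/\!/T$, note the unipotent part acts transitively on the base, take the fiber through the unit), but the mechanism you propose for the splitting contains a genuine gap. You want to choose generators $t_1,\dots,t_k$ of $\KK[S]^T$ and promote them to polynomial coordinates on $\AA^n$ complementing coordinates on the fiber; you correctly flag this as the main obstacle, but it is in fact an Abhyankar--Sathaye-type claim (that a surjection $\AA^n\to\AA^k$ with affine-space fibers is a trivial coordinate projection), which is not known in general and is never established in your argument. The fact that the $t_i$ are additive monoid characters does not trivialize the fibration. The paper avoids coordinates entirely by splitting at the level of the group: it takes $G_0\le G(S)$ to be the stabilizer of a fiber of $\pi$ (a subgroup containing $T$), chooses a complement so that $G(S)=G_0\times\GG_a^k$ with $\GG_a^k$ acting simply transitively on $S/\!/T$, and then observes that the action map $\psi\colon\GG_a^k\times S_1\to S$, $(g,y)\mapsto gy$, with $S_1$ the fiber through the unit, is bijective and hence an isomorphism by normality of $S$ in characteristic zero. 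This also settles the ``promotion to a monoid isomorphism'' that you leave open: $G_0$ acts on $S_1$ with an open orbit, so $S_1$ is a group embedding and carries a monoid structure for which $\psi$ is an isomorphism of monoids. Your alternative suggestion, the homogeneous fiber bundle theorem \cite[Theorem~6.7]{PV} together with a complement to $H$ in $\GG_a^s$, would also yield the variety-level splitting and is morally equivalent to $\psi$; but you never carry it out, retreating instead to the coordinate picture.

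The second, independent gap is your identification $S_0\cong\AA^{n-k}$. Your stated reason (``the fiber is a coordinate subspace once the $t_i$ are chosen as coordinates'') is circular, since it presupposes exactly the coordinate choice at issue; and note that one cannot instead deduce $S_0\cong\AA^{n-k}$ from a product decomposition $\GG_a^k\times S_0\cong\AA^n$ by cancelling the $\GG_a^k$ factor --- that is the Zariski cancellation problem, open in characteristic zero beyond small dimensions. The step you are missing is the paper's use of the torus: once $\psi$ shows $S_1$ is smooth, irreducible and affine, the fiber $S_1$ contains exactly one closed $T$-orbit, and this orbit is a fixed point because all invertible regular functions on $S_1$ are constant; then \cite[Corollary~6.7]{PV} (linearization of such torus actions) gives $S_1\cong\AA^{n-k}$ with a linear $T$-action. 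Only after this does your final step --- applying Lemma~\ref{l1l} to conclude that $S_0$ is a monoid with zero --- go through as written.
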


\begin{proof}
Consider the quotient morphism $\pi\colon S\to S/\!/T$ and the induced transitive action of
the unipotent part $\GG_a^s$ of $G(S)$ on $S/\!/T$. Let $G_0$ be the stabilizer in $G(S)$ of a fiber of the morphism~$\pi$. The subgroup $G_0$ contains $T$ and the group $G(S)$ is a direct product $G_0\times\GG_a^k$ with some complementary subgroup $\GG_a^k$. The action of the latter subgroup on $S/\!/T$ is transitive and effective. Consider the fiber $S_1$ of the morphism $\pi$ containing the unit of~$S$. The morphism $\psi\colon\GG_a^k\times S_1\to S$, $(g,y)\mapsto gy$ is bijective. Since the variety $S$ is normal, $\psi$ is an isomorphism. This shows that $S_1$ is a smooth irreducible affine variety. The torus $T$ acts on $S_1$ with a unique closed orbit. Since all invertible functions on $S$ and on $S_1$ are constant, this orbit is a fixed point. By~\cite[Corollary~6.7]{PV}, the variety $S_1$ is an affine space~$\AA^{n-k}$. Moreover, since the group $G_0$ acts on $S_1$ with an open orbit, the variety $S_1$ has a structure of a commutative monoid such that the map $\psi$ is an isomorphism of monoids. By Lemma~\ref{l1l}, $S_1$ is a monoid with zero.
\end{proof}

Recall that an action $G\times\AA^n\to\AA^n$, $(g,x)\mapsto gx$ of an affine algebraic group $G$ is linearizable if there is an automorphism $\varphi\colon\AA^n\to\AA^n$ such that the conjugated action $(g,x)\mapsto\varphi(g\varphi^{-1}(x))$ is linear.

\begin{lemma}\label{l3l}
Let $S$ be a commutative monoid on $\AA^n$ and $T$ be the maximal torus in $G(S)$. Then the action of $T$ on $S$ is linearizable.
\end{lemma}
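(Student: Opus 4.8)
The plan is to reduce to the case of a monoid with zero and then manufacture a suitable positive grading on the coordinate ring that forces linearizability through a graded Nakayama argument.

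First I would reduce to the case when $S$ has zero. By Lemma~\ref{l2l} there is an isomorphism of monoids $S\cong\GG_a^k\times S_0$ with $S_0$ a commutative monoid with zero, and under it $G(S)=\GG_a^k\times G_0$ with $T\subseteq G_0$ the maximal torus. In the direct product multiplication is componentwise and the $\GG_a^k$-component of $T$ is trivial, so $T$ acts trivially on the factor $\GG_a^k$. As this isomorphism is $T$-equivariant and linearizability is preserved under $T$-equivariant isomorphisms of affine spaces, it suffices to linearize $T$ on $S_0$; the trivial (linear) action on $\GG_a^k$ then completes the linearization on $\AA^n$. Thus I assume $S$ has zero.

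Set $A=\KK[S]=\KK[x_1,\dots,x_n]$ and decompose it into $T$-weight spaces $A=\bigoplus_{\chi\in\mathfrak{X}(T)}A_\chi$, where $A_\chi A_{\chi'}\subseteq A_{\chi+\chi'}$. The crucial input is Lemma~\ref{l1l}: since $S$ has zero, $A_0=\KK[S]^T=\KK$. I claim that the weight monoid $\Gamma=\{\chi:A_\chi\neq0\}$ is pointed. Indeed, if $\chi\neq0$ with $A_\chi\neq0$ and $A_{-\chi}\neq0$, then for nonzero $f\in A_\chi$ and $g\in A_{-\chi}$ the product $fg$ lies in $A_0=\KK$ and is nonzero because $A$ is a domain; hence $f$ is a unit of $A$, forcing $f\in\KK^*$ and $\chi=0$, a contradiction. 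Since $T$ acts faithfully, $\Gamma$ generates $\mathfrak{X}(T)$, and being a pointed finitely generated monoid it spans a full-dimensional strongly convex cone. Choosing an integral cocharacter $\lambda\colon\GG_m\to T$ in the interior of the dual cone, we obtain $\langle\lambda,\chi\rangle>0$ for every nonzero $\chi\in\Gamma$. Regrading $A$ by $d\mapsto A_{(d)}:=\bigoplus_{\langle\lambda,\chi\rangle=d}A_\chi$ then yields a $\Zgezero$-grading $A=\bigoplus_{d\ge0}A_{(d)}$ with $A_{(0)}=\KK$; equivalently, $\lambda$ realizes the origin as an attractive fixed point of a $\GG_m$-action on $\AA^n$.

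With this positive grading in hand I would finish by graded Nakayama. Let $\mm=\bigoplus_{d>0}A_{(d)}$ be the maximal ideal of the fixed point. Since $T$ is linearly reductive, I can choose a $T$-stable complement to $\mm^2$ in $\mm$ and take a weight basis of it, obtaining $T$-weight vectors $y_1,\dots,y_n\in\mm$ (here $n=\dim_\KK\mm/\mm^2$ because $\AA^n$ is smooth at the origin) whose classes span $\mm/\mm^2$. By graded Nakayama these generate $A$ as a $\KK$-algebra, so there is a surjection $\KK[z_1,\dots,z_n]\twoheadrightarrow A$, $z_i\mapsto y_i$; comparing Krull dimensions shows this surjection is an isomorphism, so $y_1,\dots,y_n$ form a coordinate system on $\AA^n$. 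As each $y_i$ is a $T$-eigenvector, $T$ acts diagonally in these coordinates, which is the desired linearization. The main obstacle is the construction of the positive grading: without it, a lifted weight basis of $\mm/\mm^2$ need not generate $A$ (this is precisely why general torus actions on $\AA^n$ can fail to be linearizable), so the whole argument hinges on extracting the contracting one-parameter subgroup $\lambda$ from the pointedness of $\Gamma$, which in turn is guaranteed by Lemma~\ref{l1l}.
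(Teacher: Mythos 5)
Your proof is correct, and its first step is exactly the paper's: both use Lemma~\ref{l2l} to split off a factor $\GG_a^k$ on which $T$ acts trivially and thereby reduce to a monoid with zero, where Lemma~\ref{l1l} gives $\KK[S_0]^T=\KK$. The routes then diverge: the paper finishes in one line by citing \cite[Corollary~6.7]{PV} --- the known fact that a fix-pointed action of a reductive group on a smooth affine variety with only constant invariants is isomorphic to a linear representation --- whereas you reprove this fact directly in the torus case. Your chain --- pointedness of the weight monoid $\Gamma$ from $A_0=\KK$ together with the absence of nonconstant units in $\KK[x_1,\ldots,x_n]$; a cocharacter $\lambda$ in the interior of the dual cone producing a $\Zgezero$-regrading with $A_{(0)}=\KK$; lifting a weight basis of $\mm/\mm^2$ (possible by complete reducibility of $T$-modules) and concluding by graded Nakayama plus the Krull-dimension count that the lifted weight vectors form a polynomial coordinate system --- is complete, and it is essentially the standard proof of the cited corollary specialized to tori. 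Note that strong convexity of $\cone(\Gamma)$ comes from pointedness of $\Gamma$ while full-dimensionality comes from faithfulness of the $T$-action (which holds since $G(S_0)$ acts faithfully by multiplication); you invoke both ingredients correctly, and your verification that $\Gamma\cap(-\Gamma)=\{0\}$ is precisely where the hypothesis ``monoid with zero'' enters, mirroring the paper's use of Lemma~\ref{l1l}. What your route buys is a self-contained, elementary argument requiring no external input beyond linear reductivity of $T$; what the paper's citation buys is brevity and a statement valid for arbitrary reductive groups, which is the form in which it is also needed elsewhere (e.g.\ in the proof of Lemma~\ref{l2l} itself).
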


\begin{proof}
Lemma~\ref{l2l} reduces the general case to the case when $S$ contains a unique closed $T$-orbit which is a fixed point. Such actions are known to be linearizable~\cite[Corollary~6.7]{PV}.
\end{proof}

\smallskip

Now we introduce some algebraic technique that will be used in the classification of commutative algebraic monoids. Let $R$ be an algebra over~$\KK$. A $\KK$-linear map $\delta\colon R \to R$ is called a derivation of the algebra $R$ if it satisfies the Leibniz rule, that is $\delta(fg) = \delta(f)g + f \delta(g)$ for any $f, g \in R$. A derivation $\delta$ is said to be locally nilpotent if for any $f \in R$ there exists $k \in \Zgezero$ such that $\delta^k(f) = 0$.

Let $K$ be an abelian group. An algebra $R$ is said to be $K$-graded if
$$
R = \bigoplus \limits_{w \in K} R_w
$$
and $R_{w_1}R_{w_2} \subseteq R_{w_1+w_2}$ for any $w_1, w_2 \in K$. Elements of $R_w$ are called homogeneous of degree~$w \in K$. A derivation $\delta$ is called homogeneous if it maps homogeneous elements to homogeneous ones. If in addition $R$ is a domain, there exists an element $\deg \delta \in K$ such that $\delta(R_{w}) \subseteq R_{w + \deg \delta}$ for any $w \in K$. The element $\deg \delta$ is called the degree of the derivation~$\delta$.

Let $X$ be an irreducible affine variety. It is well known that $\GG_a$-actions on $X$ are in bijection with locally nilpotent derivations on the algebra $\KK[X]$; see~\cite[Section 1.5]{Fr}. On the other hand, $\GG_m^r$-actions on $X$ correspond to $\ZZ^r$-gradings on $\KK[X]$.
Commuting $\GG_a$-actions on $X$ correspond to commuting locally nilpotent derivations on $\KK[X]$. A $\GG_a$-action commutes with a $\GG_m^r$-action if and only if the corresponding locally nilpotent derivation is homogeneous of degree zero with respect to the corresponding $\ZZ^r$-grading. Thus we come to  the following statement.

\begin{lemma} \label{grLND}
A structure of a commutative monoid of rank $r$ and corank $s$ on an irreducible affine variety $X$ is defined by a $\ZZ^r$-grading on $\KK[X]$ and a collection $\delta_1,\ldots,\delta_s$ of pairwise commuting homogeneous locally nilpotent derivations on $\KK[X]$ of degree zero such that the corresponding action of the group $\GG_m^r \times \GG_a^s$ on $X$ is effective and has an open orbit.
\end{lemma}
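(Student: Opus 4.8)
The plan is to reduce the statement to the correspondence between group embeddings and affine monoids recalled in the introduction, combined with the algebraic dictionary established in the paragraph just above. By that discussion, giving a $\ZZ^r$-grading on $\KK[X]$ together with a collection $\delta_1,\ldots,\delta_s$ of pairwise commuting homogeneous locally nilpotent derivations of degree zero is the same as giving an action of the group $G:=\GG_m^r\times\GG_a^s$ on $X$: the grading encodes the $\GG_m^r$-action, each $\delta_i$ encodes a $\GG_a$-action, the commutation of the $\delta_i$ encodes the commutation of the $\GG_a$-factors, and the vanishing of $\deg\delta_i$ encodes that each $\GG_a$-factor commutes with $\GG_m^r$. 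It therefore remains to prove that commutative monoid structures of rank $r$ and corank $s$ on $X$ correspond exactly to those actions of $G$ that are effective and have an open orbit.

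For the direct implication I would start from a commutative monoid structure $S$ on $X$ with $G(S)\cong\GG_m^r\times\GG_a^s$ (the group $G(S)$ is open in $X$, hence connected, so the structure theorem for connected commutative affine groups applies). The group $G(S)$ acts on $S$ by multiplication, and since $S$ is commutative this left action coincides with the right action and is given by a morphism. The orbit of the unit $e$ is exactly $G(S)$, which is open in $S$, so the action has an open orbit. Effectiveness is immediate: if $g\in G(S)$ acts trivially then in particular $ge=e$, whence $g=e$. The maximal torus $T=\GG_m^r$ and the unipotent part $\GG_a^s$ then produce, via the dictionary, the required grading and derivations.

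For the converse I would take an effective action of $G=\GG_m^r\times\GG_a^s$ on $X$ with an open orbit $\mathcal{O}$ and manufacture a group embedding. Because $G$ is commutative, the stabilizer $H=G_x$ of a point $x\in\mathcal{O}$ does not depend on $x$ and acts trivially on all of $\mathcal{O}$; as $\mathcal{O}$ is dense in $X$ and the action is a morphism, $H$ acts trivially on $X$, so $H$ is trivial by effectiveness. Hence $\mathcal{O}\cong G$ as a $G$-variety, and fixing a base point yields an open embedding $G\hookrightarrow X$ which is a group embedding in the sense of the introduction (in the commutative case the left and right translation actions coincide and both extend to the given action of $G$). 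By \cite[Proposition~1]{Ri1}, the variety $X$ then carries a structure of commutative affine monoid $S$ with $G(S)=G$; its rank and corank are $r$ and $s$ by construction.

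The main obstacle I anticipate is the converse direction, specifically the passage from the combinatorial datum (an effective $G$-action with a dense orbit) to an actual monoid multiplication. The delicate points are verifying that effectiveness together with density of the orbit forces the stabilizer to be trivial --- for which commutativity of $G$ is essential, since it makes the stabilizer constant along the orbit --- and correctly invoking the existence result \cite[Proposition~1]{Ri1} that turns the resulting group embedding into a monoid whose group of invertible elements is precisely $G$. One should also check that the two constructions are mutually inverse, i.e. that extracting the data from a monoid and then reconstructing returns the original multiplication; this holds because both multiplications restrict to the same group law on the common dense open subset $G$ and hence coincide.
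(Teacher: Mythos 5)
Your proposal is correct and follows essentially the same route as the paper: the paper obtains the lemma directly from the same dictionary (a $\ZZ^r$-grading for the $\GG_m^r$-action, locally nilpotent derivations for the $\GG_a$-actions, with commutation and degree zero encoding that the factors commute) combined with the correspondence between group embeddings and monoid structures via \cite[Proposition~1]{Ri1} recalled in the introduction. You merely make explicit some steps the paper leaves implicit, such as the triviality of the stabilizer on the open orbit (using commutativity of $G$) and the mutual inverseness of the two constructions, which is a fleshing-out rather than a different method.
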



\section{Three particular cases}
\label{sec3}
We keep notation of Section~\ref{sec2}. In this section we describe commutative monoids on affine space $\AA^n$ of arbitrary dimension $n$ in the cases $r = 0$, $s = 0$, and $s = 1$.

\begin{proposition} \label{corank01n}
Let $G = \GG_m^r \times \GG_a^s$ and $n=r+s$. A commutative monoid on $\AA^n$ with $G$ as a group of invertible elements is isomorphic to
$$
\begin{aligned}
1) \; (x_1, \ldots, x_n) * (y_1, \ldots, y_n) &= (x_1 + y_1, \ldots, x_n + y_n) \quad\text{if }r = 0;
\\
2) \; (x_1, \ldots, x_n) * (y_1, \ldots, y_n) &= (x_1 y_1, \ldots, x_n y_n) \quad\text{if }s = 0;
\\
3) \; (x_1, \ldots, x_n) * (y_1, \ldots, y_n) &= (x_1 y_1, \; \ldots, \; x_{n-1} y_{n-1}, \; x_1^{b_1} \ldots x_{n-1}^{b_{n-1}} y_n + y_1^{b_1} \ldots y_{n-1}^{b_{n-1}} x_n),
\\
\text{for some } b_i \in \Zgezero, \; b_1 &\le \ldots \le b_{n-1}, \quad\text{if }s = 1.
\end{aligned}
$$
Moreover, the corresponding monoids with different values of parameters are non-isomorphic.
\end{proposition}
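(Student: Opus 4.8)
The plan is to use Lemma~\ref{grLND} to replace each monoid structure by its infinitesimal data: a $\ZZ^r$-grading on $\KK[x_1,\dots,x_n]$ together with $s$ pairwise commuting homogeneous locally nilpotent derivations of degree zero, subject to effectiveness and the open-orbit condition. For $r=0$ (case~1) there is no grading and we are given $n$ commuting locally nilpotent derivations whose $\GG_a^n$-action is effective with an open orbit; since $\dim\GG_a^n=n$, the open orbit is all of $\AA^n$, the action is simply transitive, and $S$ is the group $\GG_a^n$ itself, which over a field of characteristic zero is isomorphic to $(\GG_a^n,+)$. One recovers this directly by simultaneously rectifying the commuting derivations to $\partial/\partial x_i$. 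For $s=0$ (case~2) there are no derivations and the datum is a faithful $\GG_m^n$-action with an open orbit; by Lemma~\ref{l3l} it is diagonal with characters $\chi_1,\dots,\chi_n$, and since effectiveness means the $\chi_i$ generate the character lattice while the open orbit forces them to span it over $\QQ$, they form a basis. After the corresponding automorphism the action is standard, and the monoid law, extending the group law of $\GG_m^n$, is coordinatewise multiplication.

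The case $s=1$ is the heart of the proof. First I would linearize the maximal torus $T=\GG_m^{n-1}$ by Lemma~\ref{l3l}, so that $T$ acts diagonally with weights $\chi_1,\dots,\chi_n$ in the character lattice $M\cong\ZZ^{n-1}$, and let $\delta$ be the homogeneous locally nilpotent derivation of degree zero. It is convenient to split off the trivial part using Lemma~\ref{l2l}: writing $S\cong\GG_a^k\times S_0$ with $S_0$ a monoid with zero, the only options are $k=1$, where $S_0$ is a rank-$(n-1)$ toric monoid covered by case~2 and one obtains formula~(3) with all $b_i=0$, and $k=0$, where $S$ itself has zero and, by Lemma~\ref{l1l}, the weights $\chi_i$ are nonzero and lie in a pointed cone. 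The key structural claim is that, after a $T$-equivariant automorphism of $\AA^n$, the derivation takes the form $\delta=x_1^{b_1}\cdots x_{n-1}^{b_{n-1}}\,\partial/\partial x_n$ with $b_i\in\Zgezero$; equivalently $\ker\delta=\KK[x_1,\dots,x_{n-1}]$ and $x_n$ is a slice. Here one uses that $\Spec\ker\delta$ carries a $T$-action with an open orbit (the $\GG_a$-orbits being the fibres of the quotient), hence is an $(n-1)$-dimensional affine toric variety, while homogeneity of degree zero pins down $\delta$ on the slice. Since $\deg\delta=0$ we get $\chi_n=b_1\chi_1+\dots+b_{n-1}\chi_{n-1}$, and effectiveness forces $\chi_1,\dots,\chi_{n-1}$ to be a basis of $M$; applying an automorphism of $T$ we may take $\chi_i=e_i$, so $\chi_n=(b_1,\dots,b_{n-1})$.

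It remains to recover the monoid law. On the open orbit $\{x_1\cdots x_{n-1}\neq0\}\cong T\times\GG_a$ I would use the coordinates $t_i=x_i$ for $i<n$ together with the additive coordinate $u=x_n/(x_1^{b_1}\cdots x_{n-1}^{b_{n-1}})$, which is $T$-invariant and on which the group law reads $(t,u)(t',u')=(tt',u+u')$. Translating back through these formulas, using $\prod(x_iy_i)^{b_i}=\prod x_i^{b_i}\prod y_i^{b_i}$, yields exactly $(x*y)_i=x_iy_i$ for $i<n$ and $(x*y)_n=x_1^{b_1}\cdots x_{n-1}^{b_{n-1}}y_n+y_1^{b_1}\cdots y_{n-1}^{b_{n-1}}x_n$, which is multiplication~(3). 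Finally, permuting the coordinates $x_1,\dots,x_{n-1}$ is an isomorphism of monoids and permutes the exponents accordingly, so we may arrange $b_1\le\dots\le b_{n-1}$.

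For the last assertion, that distinct parameters give non-isomorphic monoids, I would argue that the data is intrinsic: an isomorphism of monoids identifies the groups of invertible elements, hence the tori and their character lattices up to $\GL(n-1,\ZZ)$, and carries $\ker\delta$, the invariant ring of the unipotent part, to $\ker\delta'$ respecting the gradings. This induces a weight-preserving identification sending $\chi_n$ to $\chi'_n$, so that expressing $\chi_n$ in the basis $\chi_1,\dots,\chi_{n-1}$ recovers the ordered tuple $(b_1,\dots,b_{n-1})$ as an invariant, and different tuples yield non-isomorphic monoids. I expect the main obstacle to be the structural claim of the second paragraph: ruling out degree-zero locally nilpotent derivations moving several variables simultaneously, and showing that the open-orbit and effectiveness conditions force $\delta$, up to a $T$-equivariant automorphism, into the single-monomial form $x_1^{b_1}\cdots x_{n-1}^{b_{n-1}}\,\partial/\partial x_n$ with a basis of weights. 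Everything else reduces to a dimension count or to the passage between the group law and the coordinates.
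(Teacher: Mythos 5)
Your cases 1) and 2) are sound and essentially coincide with the paper's treatment (the paper invokes closedness of unipotent orbits for 1) and Bia{\l}ynicki-Birula's uniqueness of the toric structure on $\AA^n$ for 2); your direct arguments amount to the same thing), and your endgame computation on the open orbit matches the paper's derivation of the multiplication law. The genuine gap is exactly where you flag it: for $s=1$, the ``key structural claim'' that after a $T$-equivariant automorphism $\delta=x_1^{b_1}\cdots x_{n-1}^{b_{n-1}}\,\partial/\partial x_n$ with $\chi_1,\dots,\chi_{n-1}$ a basis \emph{is} the whole content of the classification, and your proposal asserts it rather than proves it. Note that $\delta$ is a priori homogeneous only for the $(n-1)$-dimensional torus $T$, not for the big torus $\TT=\GG_m^n$, so $\delta(x_j)$ may be a sum of several monomials of the same $T$-weight and $\delta$ may genuinely mix variables (already weights $\chi_1=\chi_2$ produce nilpotent linear pieces like $x_1\,\partial/\partial x_2$; compare Proposition~\ref{prop2der}, where the analogous normalization for rank one on $\AA^3$ requires a multi-case analysis and a nontrivial homogeneous change of variables). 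Your sketch via $\Spec\Ker\delta$ leaves unproved: that $\Ker\delta$ is finitely generated (kernels of locally nilpotent derivations on polynomial rings are not finitely generated in general, so the grading and open-orbit hypotheses must be used), that it is a polynomial ring on $n-1$ semi-invariant coordinates, and that the remaining variable maps to a \emph{single} monomial --- none of which ``homogeneity of degree zero'' yields for free. The paper bypasses all of this by importing the Demazure-root machinery of \cite{AK}: Theorem~2 there states that every corank-one embedding of a commutative group is isomorphic to one in which the $\GG_a$-subgroup is a root subgroup normalized by the full torus $\TT$, and the Demazure roots of $\AA^n$ are precisely $t_1^{b_1}\cdots t_{n-1}^{b_{n-1}}t_n^{-1}$ up to permutation of variables. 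Without this imported theorem, or an actual proof of your structural claim, the proposal is a plan rather than a proof.

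Your non-isomorphy argument is also under-justified at one point: an abstract monoid isomorphism does identify $T$ with $T'$ and $\Ker\delta$ with $\Ker\delta'$ as graded algebras, which determines the multiset $\{\chi_1,\dots,\chi_{n-1}\}$ up to a lattice automorphism, but you give no reason why $\chi_n$ is intrinsic --- the coordinate $x_n$ is not canonical, so the claim that the identification ``sends $\chi_n$ to $\chi'_n$'' needs an argument. It can be patched when the monoid has a zero (read the missing weight off the $T$-module structure of the tangent space at $0$, and separate the case $b=0$ from $b\neq 0$ by the existence of a zero), but the paper's invariant is more direct: restricting the $G$-action to each boundary component $X_i=\{x_i=0\}$, the kernel of non-effectivity is a one-dimensional torus if $b_i=0$ and $\KK\times\ZZ/b_i\ZZ$ if $b_i>0$, which recovers the sorted tuple $(b_1,\dots,b_{n-1})$ from the monoid structure alone.
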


\begin{proof}
1) The structure of a commutative monoid of rank 0 defines an action of the group $\GG_a^n$ on $\AA^n$ with an open orbit. Any orbit of a unipotent group on an affine variety is closed, see \cite[Section 1.3]{PV}. Thus the monoid $\AA^n$ coincides with the group $\GG_a^n$.

2) The case $s = 0$ follows from the uniqueness of the structure of a toric variety on $\AA^n$, see~\cite{BB}.

3) Let $X$ be a toric variety with the acting torus $\TT$. Consider an action $\GG_a \times X \to X$ normalized by $\TT$, that is the $\GG_a$-subgroup is stable under conjugation by elements of the torus $\TT$ in the automorphism group $\Aut(X)$. Then $\TT$ acts on $\GG_a$ by conjugation with some character $\chi$, which is called a Demazure root of $X$. If $T = \Ker \chi$ then the group $G = T \times \GG_a$ has corank~1, acts on $X$ with an open orbit, and $X$ is a $G$-embedding, see  \cite[Proposition~6]{AK}. By \cite[Theorem~2]{AK}, all $G$-embeddings of connected commutative affine algebraic group $G$ of corank~1 can be realized this way.

Let us apply this fact to $X = \AA^n$. Since the structure of a toric variety is unique up to isomorphism, we can assume that the torus $\TT$ acts on $\AA^n$ by diagonal matrices. Let $t = (t_1, \ldots, t_n) \in \TT$. Any Demazure root of $\AA^n$ has the form $\chi(t) = t_1^{b_1} \ldots t_{n-1}^{b_{n-1}}t_n^{-1}$ up to order of variables for some $b_i \in \Zgezero$, $b_1 \le \ldots \le b_{n - 1}$; see e.g.~\cite[Example~1]{AK}.

For $c = (c_1, \ldots, c_n)$ denote $x^c := x_1^{c_1} \ldots x_n^{c_n}$.  Let $b = (b_1, \ldots, b_{n-1}, 0)$. Then an element $t \in \Ker \chi$ acts on $x \in \AA^n$ by
$(t_1 x_1, \; \ldots, \; t_{n-1} x_{n-1}, \; t^b x_n)$. The $\GG_a$-action corresponding to $\chi$ has the form $\exp \alpha \delta$, $\alpha \in \GG_a$, with $\delta = x^b \frac{\pa}{\pa x_n}$, that is
$$
\exp \alpha \delta\circ x = \bigl(x_1, \ldots, x_{n-1}, \;
x_n + \alpha x^b\bigr).
$$
Thus the action of the group $G = T \times \GG_a$ is given by
\begin{equation} \label{eqcorank1act}
t \exp \alpha \delta\circ x
= \left(t_1 x_1, \ldots, t_{n-1} x_{n-1}, \; t^b \bigl(x_n + \alpha x^b\bigr)\right).
\end{equation}
This action has an open orbit $\{x_i \ne 0 \mid 1 \le i \le n-1\}$. Then a $G$-embedding can be given by the identification
$$\begin{aligned}
G = \GG_m^{n-1} \times \GG_a & \longleftrightarrow \{x_i \ne 0 \mid 1 \le i \le n-1\} \subset \AA^n;
\\
(t_1, \ldots, t_{n-1}, \alpha) & \longleftrightarrow y = (t_1, \ldots, t_{n-1}, \alpha) \circ (1, \ldots, 1, 0) =
\bigl(t_1, \; \ldots, \; t_{n-1}, \; t^b \alpha\bigr).
\end{aligned}$$
Substituting $t_i = y_i$, $1 \le i \le n - 1$, and $\alpha = y^{-b}y_n$ in~\eqref{eqcorank1act} we obtain
$$\begin{aligned}
x * y = \bigl(y_1 x_1, \; \ldots, \; y_{n-1} x_{n-1}, \;
y^b \bigl(x_n + y^{-b} y_n x^b\bigr)\bigr) =
\left(x_1 y_1, \; \ldots, \; x_{n-1} y_{n-1}, \; x^b y_n + y^b x_n\right).
\end{aligned}$$

Let us prove that these monoids are non-equivalent for different vectors $b$. Note that the complement of the open orbit consists of $n - 1$ components $X_i := \{x_i = 0\}$, $1 \le i \le n - 1$. The restriction of action~\eqref{eqcorank1act} on $X_i$ is given by
$$
t \exp \alpha \delta\circ (x_1, \ldots, 0, \ldots, x_n) =
\left\{\begin{aligned}
&\left(t_1 x_1, \ldots, 0, \ldots, t_{n-1} x_{n-1}, \; t^b \bigl(x_n + \alpha x^b\bigr) \right)
\quad\text{if } b_i = 0
\\
&\left(t_1 x_1, \ldots, 0, \ldots, t_{n-1} x_{n-1}, \; t^b x_n \right)
\quad\text{if } b_i \ne 0
\end{aligned}\right.$$
The kernel of non-effectivity of this action is a one-dimensional torus if $b_i = 0$ and the direct product of $\KK$ and the cyclic group of order $b_i$ otherwise. Thus the integers $b_i$ are uniquely determined by the monoid structure.
\end{proof}

Since all monoid structures on $\AA^1$ and $\AA^2$ are covered by Proposition~\ref{corank01n}, we obtain the following classification result.

\begin{proposition} \label{A1A2}
1) Every commutative monoid on $\AA^1$ is isomorphic to one of the following monoids:
$$\begin{array}{rl}
A: &
(x_1) * (y_1) = (x_1 + y_1);
\\
M: &
(x_1) * (y_1) = (x_1 y_1).
\end{array}$$
2) Every commutative monoid on $\AA^2$ is isomorphic to one of the following monoids:
$$\begin{array}{rl}
2A: &
(x_1, x_2) * (y_1, y_2) = (x_1 + y_1, \; x_2 + y_2);
\\
M \underset{b}{+} A: &
(x_1, x_2) * (y_1, y_2) = (x_1 y_1, \; x_1^b y_2 + y_1^b x_2), \quad b \in \Zgezero;
\\
2M: &
(x_1, x_2) * (y_1, y_2) = (x_1 y_1, \; x_2 y_2).
\end{array}$$
\end{proposition}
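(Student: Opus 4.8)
The plan is to specialize Proposition~\ref{corank01n} to the cases $n=1$ and $n=2$, and to check that these three particular cases exhaust all possible values of the rank $r$ and corank $s$. The key observation is that for a commutative monoid $S$ on $\AA^n$, the group of invertible elements is $G(S)=\GG_m^r\times\GG_a^s$ with $r+s=n$, since $G(S)$ is open in $S$ and hence has dimension $n$. Thus a classification reduces to running through all admissible pairs $(r,s)$ with $r+s=n$ and, for each, applying Proposition~\ref{corank01n}, which covers exactly the cases $r=0$ (namely $s=n$), $s=0$ (namely $r=n$), and $s=1$ (namely $r=n-1$).

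For $n=1$ the only possibilities are $(r,s)=(0,1)$ and $(r,s)=(1,0)$. Applying case~1) of Proposition~\ref{corank01n} to $(r,s)=(0,1)$ gives the additive monoid $A$, and applying case~2) to $(r,s)=(1,0)$ gives the multiplicative monoid $M$. For $n=2$ the admissible pairs are $(r,s)\in\{(0,2),(1,1),(2,0)\}$, and since $2$ is small enough every pair falls into one of the three cases of Proposition~\ref{corank01n}: the pair $(0,2)$ is covered by case~1) (here $r=0$), giving $2A$; the pair $(1,1)$ is covered by case~3) (here $s=1$, $n-1=1$, with a single parameter $b=b_1\in\Zgezero$), giving $M\underset{b}{+}A$; and the pair $(2,0)$ is covered by case~2) (here $s=0$), giving $2M$. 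In case~3) with $n=2$ the general formula
$$
x * y = \left(x_1 y_1, \; \ldots, \; x_{n-1}y_{n-1}, \; x^b y_n + y^b x_n\right)
$$
collapses to $(x_1y_1,\; x_1^b y_2 + y_1^b x_2)$, matching the stated formula for $M\underset{b}{+}A$.

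The only step requiring a remark is why the three cases of Proposition~\ref{corank01n} genuinely cover \emph{all} monoids when $n\le 2$, rather than leaving a gap. This is precisely because the cases $r=0$, $s=0$, $s=1$ together account for every pair $(r,s)$ with $r+s\le 2$: the constraint $s\in\{0,1,2\}$ leaves no pair with $s\ge 2$ and $r\ge 1$ except $(0,2)$, which is handled by $r=0$. For $n=2$ there is no admissible pair with both $r\ge 2$ and $s\ge 1$, nor with $s\ge 2$ and $r\ge 1$; hence no case is missing. That the listed monoids are pairwise non-isomorphic follows directly from the last assertion of Proposition~\ref{corank01n}: monoids with distinct pairs $(r,s)$ have non-isomorphic groups of invertible elements, and within case~3) the integers $b_i$ — here the single integer $b$ — are uniquely determined invariants of the monoid structure. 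I expect no substantial obstacle; the entire argument is a case-by-case specialization, and the only point demanding care is confirming the exhaustiveness of the three regimes in low dimension, which is a finite combinatorial check on the pairs $(r,s)$.
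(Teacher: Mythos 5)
Your proposal is correct and follows exactly the paper's route: the paper derives Proposition~\ref{A1A2} in one line as an immediate consequence of Proposition~\ref{corank01n}, observing that for $n\le 2$ every admissible pair $(r,s)$ with $r+s=n$ falls under one of the cases $r=0$, $s=0$, $s=1$. Your write-up merely makes explicit the finite check of pairs $(r,s)$ and the non-isomorphy argument that the paper leaves implicit in the final assertion of Proposition~\ref{corank01n}.
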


\begin{remark}
The operation $M \underset{b}{+} A$ decomposes into the sum of $M$ and $A$ if $b = 0$.
\end{remark}


\section{Commutative monoid structures on the affine 3-space}
\label{sec4}

For $b, c \in \Zgtzero$, $b\le c$, denote by $Q_{b, c}$ the polynomial
$$
Q_{b, c}(x_1, y_1, x_2, y_2) =
\sum \limits_{k = 1}^d \binom{d+1}{k} x_1^{e + b(k - 1)} y_1^{e+b(d-k)} x_2^{d-k+1} y_2^k,
$$
where $c = b d + e, \; d, e \in \ZZ, \; 0 \le e < b$. Note that $Q_{b, c}(x_1, y_1, x_2, y_2) = Q_{b, c}(y_1, x_1, y_2, x_2)$ and
$$
Q_{b, c}(x_1, y_1, x_2, y_2) =
\frac{(x_1^by_2+y_1^bx_2)^{d+1}-(x_1^by_2)^{d+1}-(y_1^bx_2)^{d+1}}
{x_1^{b-e} y_1^{b-e}}.
$$

Let us formulate the main result.

\renewcommand\arraystretch{1.2}

\begin{theorem} \label{A3}
Every commutative monoid on $\AA^3$ is isomorphic to one of the following monoids:
$$
\begin{array}{c|c|c}
\text{rk} & \text{Notation} & (x_1, x_2, x_3) * (y_1, y_2, y_3) \\\hline\hline
0 & 3A &
(x_1 + y_1, \, x_2 + y_2, \, x_3 + y_3)
\\
1 & M \underset{b}{+} A \underset{c}{+} A &
\bigl(x_1y_1, \, x_1^b y_2 + y_1^b x_2, \, x_1^c y_3 + y_1^c x_3\bigr),
\; b, c \in \Zgezero, \; b \le c
\\
1 & M \underset{b}{+} A \underset{b,c}{+} A &
\bigl(x_1y_1, \, x_1^b y_2 + y_1^b x_2, \, x_1^c y_3 + y_1^c x_3 + Q_{b, c}(x_1, y_1, x_2, y_2)\bigr), b, c \in \Zgtzero, b \le c
\\
2 & M + M \underset{b, c}{+} A &
(x_1 y_1, \, x_2 y_2, \, x_1^bx_2^c y_3 + y_1^b y_2^c x_3), \; b, c \in \Zgezero, \; b \le c
\\
3 & 3M &
(x_1 y_1, \, x_2 y_2, \, x_3 y_3)
\end{array}
$$
Moreover, every two monoids of different types or of the same type with different values of parameters from this list are non-isomorphic.
\end{theorem}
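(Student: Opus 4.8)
The plan is to stratify all commutative monoid structures on $\AA^3$ by the rank $r$, which by Lemma~\ref{grLND} is the dimension of the maximal torus and satisfies $r+s=3$ for the corank $s$. Three of the four strata are immediate from Proposition~\ref{corank01n}: $r=0$ (so $s=3$) forces $3A$; $r=3$ (so $s=0$) forces the toric monoid $3M$; and $r=2$ (so $s=1$) is exactly the corank-one family of Proposition~\ref{corank01n}, which for $n=3$ reads $M+M\underset{b,c}{+}A$, the same proposition also establishing that distinct $(b,c)$ give non-isomorphic monoids. Hence everything concentrates on the stratum $r=1$, $s=2$.

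For $r=1$ I would first invoke Lemma~\ref{l2l} to split $S\cong\GG_a^k\times S_0$ with $S_0$ a rank-$1$ monoid with zero on $\AA^{3-k}$. The cases $k=2$ and $k=1$ descend to $\AA^1$ and $\AA^2$, where Propositions~\ref{corank01n} and~\ref{A1A2} give $M$ and $M\underset{b}{+}A$; re-attaching the free additive summands yields precisely the degenerate members $b=c=0$ and $b=0<c$ of the family $M\underset{b}{+}A\underset{c}{+}A$. The decisive case is $k=0$: a rank-$1$ monoid with zero on $\AA^3$. By Lemma~\ref{l3l} the torus is linearized, and since $S$ has zero the invariant ring $\KK[S]^T$ is trivial (Lemma~\ref{l1l}), which for a linear $\GG_m$-action means there is no nonconstant invariant monomial, i.e. the grading of $\KK[x_1,x_2,x_3]$ is positive with weights $w_1\le w_2\le w_3$, all positive. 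By Lemma~\ref{grLND} the monoid is then encoded by a pair of commuting homogeneous locally nilpotent derivations $\delta_1,\delta_2$ of degree zero whose $\GG_m\times\GG_a^2$-action is effective with an open orbit.

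The heart of the matter, and the step I expect to be the main obstacle, is to classify these pairs (this is Proposition~\ref{prop2der}). Each $\delta_i$ preserves every graded piece and, by local nilpotence, acts nilpotently on it; applied to the lowest-weight component, and using that commuting nilpotent operators share a kernel vector, this lets me arrange $x_1\in\ker\delta_1\cap\ker\delta_2$ after a linear change among equal-weight variables, so both $\delta_i$ are $\KK[x_1]$-derivations. After subtracting a multiple of $\delta_1$ I may assume $\delta_2(x_2)=0$ and, rescaling, $\delta_1(x_2)=x_1^b$ (nonzero, else the open orbit cannot reach the $x_2$-direction); writing $w_2=b$, $w_3=c$ and using effectiveness to normalize $w_1=1$, the commutation relation then forces $\delta_2=x_1^c\partial_{x_3}$ and $\delta_1=x_1^b\partial_{x_2}+P(x_1,x_2)\partial_{x_3}$ with $P$ homogeneous of degree $c$, after which no further condition on $P$ is imposed (local nilpotence of $\delta_1$ holds automatically). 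Thus the classification reduces to computing the orbits of such $P$ under grading-compatible coordinate changes: the shifts $x_3\mapsto x_3-\psi(x_1,x_2)$ modify $P$ by $\delta_1(\psi)=\psi_{x_2}x_1^b$ and so kill every term of $P$ of $x_2$-degree below the top one, but the top term $x_1^{e}x_2^{d}$ (where $c=bd+e$, $0\le e<b$) cannot be removed for degree reasons—its would-be primitive $x_2^{d+1}$ has degree $c+(b-e)>c$. Hence $P$ reduces either to $0$ or, after scaling, to a nonzero multiple of $x_1^{e}x_2^{d}$, giving exactly two normal forms; integrating the flow of the second $\delta_1$ and reading the multiplication off the group law on the open orbit, as in Proposition~\ref{corank01n}, produces the polynomial $Q_{b,c}$ and hence the two families $M\underset{b}{+}A\underset{c}{+}A$ and $M\underset{b}{+}A\underset{b,c}{+}A$.

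For the non-isomorphism claim I would argue as follows. Rank is intrinsic, so monoids in different rows are separated at once; within rank~$1$ the weight triple $(1,b,c)$ is recovered from the $T$-action on the irreducible components of the complement of the open orbit, exactly as in the proof of Proposition~\ref{corank01n}, which separates different $(b,c)$. It remains to distinguish the two families sharing weights $(1,b,c)$: any monoid isomorphism restricts to a grading-compatible automorphism of $\AA^3$, hence to one of the admissible coordinate changes analyzed above, under which the coupling $P$ is invariant up to its removable lower-order terms. Since $P=0$ and $P\ne 0$ lie in different orbits—precisely because the top term $x_1^{e}x_2^{d}$ is non-removable—the monoids $M\underset{b}{+}A\underset{c}{+}A$ and $M\underset{b}{+}A\underset{b,c}{+}A$ are non-isomorphic, completing the classification.
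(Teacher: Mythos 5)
Your route through the existence half coincides with the paper's: stratify by rank, quote Proposition~\ref{corank01n} for ranks $0$, $2$, $3$, split off $\GG_a$-factors via Lemma~\ref{l2l} to get the $b=0$ members in rank one without zero, linearize the torus by Lemma~\ref{l3l}, and reduce rank one with zero to pairs of commuting homogeneous locally nilpotent derivations of degree zero. Your normalization of the pair is a compressed but essentially faithful version of Proposition~\ref{prop2der}; the phrase ``using effectiveness to normalize $w_1=1$'' hides the divisibility case analysis ($a\nmid b$, $a\nmid c$) that the paper runs through explicitly, but since you do invoke the open-orbit condition to force $\delta_1(x_2)$ and $\delta_2(x_3)$ nonzero, the logic completes the same way, and the two normal forms and the integration producing $Q_{b,c}$ match the paper.

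The genuine gap is in the non-isomorphism step for the two rank-one families with zero sharing $(b,c)$. You assert that any monoid isomorphism ``restricts to one of the admissible coordinate changes analyzed above,'' but the only changes you analyzed are the shifts $x_3\mapsto x_3-\psi(x_1,x_2)$ and scalings. An isomorphism of monoids additionally induces (i) an arbitrary automorphism of the graded algebra $\KK[x_1,x_2,x_3]$, which mixes equal-weight variables in the degenerate cases (e.g. $x_2\mapsto\lambda x_2+\mu x_1^b$ always, $x_1,x_2$-mixing when $b=1$, $x_2,x_3$-mixing when $b=c$), and (ii) a $\GL_2$-change of basis of the pair $(\delta_1,\delta_2)$ inside the unipotent Lie algebra, since only the span of the derivations is intrinsic. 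You never verify that the vanishing or nonvanishing of the top coefficient at $x_1^ex_2^d$ is preserved under (i) and (ii) combined; for instance, after remixing $\delta_1'=u\delta_1+v\delta_2$ one must track that coefficient together with the renormalization $\delta_1'(x_2)=x_1^b$. The claim is in fact true — mixing with $\delta_2$ only adds multiples of $x_1^c$, and graded substitutions multiply the top coefficient by a nonzero scalar modulo lower $x_2$-degree terms — but as written the hardest point of the theorem is asserted rather than proved. The paper avoids this orbit analysis altogether by exhibiting a conjugation-invariant: for action~\eqref{A3act_1} exactly one one-parameter subgroup of the unipotent part occurs as the stabilizer of an element of the component $\KK[x_1,x_2,x_3]_c$, while for~\eqref{A3act_2} infinitely many do. Two smaller inaccuracies: recovering $(b,c)$ ``from the $T$-action on the irreducible components of the complement of the open orbit, exactly as in Proposition~\ref{corank01n}'' does not transfer, because for the rank-one monoids with zero the boundary is the single irreducible hyperplane $\{x_1=0\}$ — the paper instead reads $b,c$ off the graded-algebra structure defined by the $T$-action; and the zero-free members $M \underset{0}{+} A \underset{c}{+} A$ with varying $c$ still need separating, which the paper does via the base and fiber of the quotient morphism $S\to S/\!/T$, a case your sketch leaves out.
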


\medskip

\begin{example} The monoid $M \underset{b}{+} A \underset{b,c}{+} A$ with $b =c =1$ is given by
$$
x*y=\bigl(x_1y_1, \; x_1y_2 + y_1x_2, \; x_1y_3 + 2x_2y_2 + y_1x_3\bigr),
$$
while the monoid $M \underset{b}{+} A \underset{b,c}{+} A$ with $b = 1$, $c = 3$ corresponds to multiplication
$$
x*y=\bigl(x_1y_1, \; x_1 y_2 + y_1 x_2, \; x_1^3 y_3 + 4 x_1^2x_2y_2^3 + 6 x_1y_1x_2^2y_2^2 + 4y_1^2x_2^3y_2 + y_1^3 x_3\bigr).
$$
\end{example}

\medskip

We come to the proof of Theorem~\ref{A3}. First we need the following facts on locally nilpotent derivations.

\begin{lemma} \label{lem0}
Let $\delta$ be a locally nilpotent derivation on the algebra $\KK[x_1, \ldots, x_n]$ with $\delta(\KK[x_1, \ldots, x_{n-1}]) \subseteq \KK[x_1, \ldots, x_{n-1}]$ and $\delta(x_n) \in \KK x_n + \KK[x_1, \ldots, x_{n-1}]$. Then $\delta(x_n) \in \KK[x_1, \ldots, x_{n-1}]$.
\end{lemma}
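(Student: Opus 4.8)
The plan is to isolate the coefficient of $x_n$ in $\delta(x_n)$ and prove that it vanishes. Write $R = \KK[x_1,\dots,x_{n-1}]$, so that the hypotheses read $\delta(R)\subseteq R$ and $\delta(x_n) = \lambda x_n + f$ for some $\lambda\in\KK$ and $f\in R$; the whole algebra is the polynomial ring $R[x_n]$, and the goal is precisely to show $\lambda = 0$. The idea is to track how the powers of $\delta$ act on the part of degree one in $x_n$, using that $\delta$ preserves $R$ to keep all correction terms inside $R$.

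First I would establish, by induction on $k$, that
$$
\delta^k(x_n) = \lambda^k x_n + g_k \qquad\text{for some } g_k\in R.
$$
The base case $k=1$ is the hypothesis, with $g_1=f$. For the inductive step, apply $\delta$ to $\lambda^k x_n + g_k$: since $\delta(R)\subseteq R$ we have $\delta(g_k)\in R$, while $\delta(\lambda^k x_n) = \lambda^k(\lambda x_n + f) = \lambda^{k+1}x_n + \lambda^k f$. Hence the coefficient of $x_n$ gets multiplied by $\lambda$ and the remaining terms stay in $R$, giving $g_{k+1}=\lambda^k f+\delta(g_k)\in R$. This is the only computation in the argument and it is entirely routine.

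Finally, because $\delta$ is locally nilpotent there is some $N\ge 1$ with $\delta^N(x_n)=0$, so that $\lambda^N x_n + g_N = 0$ in $R[x_n]$. As $g_N\in R$ has $x_n$-degree $0$, comparing the coefficient of $x_n$ on both sides forces $\lambda^N = 0$, and therefore $\lambda = 0$. This yields $\delta(x_n)=f\in\KK[x_1,\dots,x_{n-1}]$, as claimed. I do not anticipate any real obstacle here: the argument rests on the elementary fact that a locally nilpotent derivation admits no nonzero ``eigenvalue'', and the hypothesis that $\delta$ preserves $\KK[x_1,\dots,x_{n-1}]$ is exactly what decouples the $x_n$-coefficient so that it evolves independently as $\lambda^k$.
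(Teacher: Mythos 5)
Your proof is correct and coincides with the paper's own argument: both show by induction, using $\delta(\KK[x_1,\ldots,x_{n-1}])\subseteq\KK[x_1,\ldots,x_{n-1}]$, that $\delta^k(x_n)\in\lambda^k x_n+\KK[x_1,\ldots,x_{n-1}]$, and then invoke local nilpotency to force $\lambda=0$. You have merely written out explicitly the induction that the paper leaves implicit.
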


\begin{proof}
Let $\delta(x_n) \in \lambda x_n + \KK[x_1, \ldots, x_{n-1}]$. Then $\delta^k(x_n) \in \lambda^k x_n + \KK[x_1, \ldots, x_{n-1}]$ is equal to~$0$ for some $k$, whence $\lambda = 0$.
\end{proof}

The following lemma is proved, for example, in~\cite[Corollary 1.20]{Fr}.

\begin{lemma} \label{lemFr}
Let $\delta$ be a locally nilpotent derivation on a domain $R$ and $f \in R$. Then $f \mid \delta(f)$ implies $\delta(f) = 0$.
\end{lemma}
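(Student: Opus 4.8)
The plan is to introduce the standard $\delta$-degree function and exploit its behavior on products. If $f = 0$ the claim is trivial, so assume $f \neq 0$. Since $\delta$ is locally nilpotent, for every nonzero $h \in R$ there is a well-defined integer
$$
\deg_\delta(h) := \max\{\, m \in \Zgezero : \delta^m(h) \neq 0 \,\} \ge 0,
$$
and I set $\deg_\delta(0) := -\infty$. The whole argument rests on one key property: for nonzero $h_1, h_2 \in R$ one has $\deg_\delta(h_1 h_2) = \deg_\delta(h_1) + \deg_\delta(h_2)$.

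To establish this property, write $m_i = \deg_\delta(h_i)$ and expand $\delta^{m_1 + m_2}(h_1 h_2)$ by the Leibniz rule. In the sum $\sum_j \binom{m_1+m_2}{j}\delta^j(h_1)\,\delta^{m_1+m_2-j}(h_2)$ every term with $j > m_1$ or $m_1+m_2-j > m_2$ vanishes, so the only surviving term is
$$
\binom{m_1 + m_2}{m_1}\, \delta^{m_1}(h_1)\, \delta^{m_2}(h_2).
$$
Here the characteristic-zero hypothesis guarantees that the binomial coefficient is nonzero, and the domain hypothesis guarantees that the product of the two nonzero leading terms is nonzero; hence $\delta^{m_1+m_2}(h_1h_2) \neq 0$. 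The same Leibniz bookkeeping shows $\delta^{m}(h_1 h_2) = 0$ for every $m > m_1 + m_2$, which yields the asserted additivity.

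Now suppose for contradiction that $\delta(f) \neq 0$, and write $\delta(f) = gf$ with $g \in R$; since $\delta(f) \neq 0$ we have $g \neq 0$, so $\deg_\delta(g) \ge 0$. Applying $\delta$ drops the $\delta$-degree by exactly one: putting $k = \deg_\delta(f)$ (which satisfies $k \ge 1$ because $\delta(f)\neq 0$), we get $\delta^{k-1}(\delta(f)) = \delta^k(f) \neq 0$ while $\delta^{k}(\delta(f)) = \delta^{k+1}(f) = 0$, so $\deg_\delta(\delta(f)) = k - 1$. On the other hand, additivity gives $\deg_\delta(gf) = \deg_\delta(g) + k \ge k$. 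Combining, $k - 1 = \deg_\delta(\delta(f)) = \deg_\delta(gf) \ge k$, a contradiction. Therefore $\delta(f) = 0$.

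I expect the only delicate point to be the additivity of $\deg_\delta$ on products, precisely because this is where the two standing hypotheses — that $R$ is a domain and that the ground field has characteristic zero — are genuinely used, to prevent the leading term $\binom{m_1+m_2}{m_1}\delta^{m_1}(h_1)\delta^{m_2}(h_2)$ from vanishing. Once that is in place, the remaining degree comparison is immediate.
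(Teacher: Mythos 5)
Your proof is correct and coincides with the standard argument: the paper does not prove this lemma itself but cites \cite[Corollary~1.20]{Fr}, whose proof is precisely your degree-function computation --- additivity of $\deg_\delta$ on products (using that $R$ is a domain over a field of characteristic zero, so the leading Leibniz term $\binom{m_1+m_2}{m_1}\delta^{m_1}(h_1)\delta^{m_2}(h_2)$ survives) combined with the fact that $\delta$ lowers $\deg_\delta$ by exactly one. Your explicit flagging of where the domain and characteristic-zero hypotheses enter is accurate, and no gap remains.
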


Consider a graded algebra $\KK[x_1, x_2, x_3]$ with homogeneous $x_1, x_2, x_3$ of coprime degrees $a, b, c \in \Zgtzero$ respectively, $a \le b \le c$.

\begin{proposition}
\label{prop2der}
Let $\delta_1, \delta_2$ be two commuting locally nilpotent derivations of degree zero on $\KK[x_1, x_2, x_3]$ with
$\dim\bigl(\Ker\delta_1 \cap \Ker\delta_2 \cap \langle x_1, x_2, x_3\rangle\bigr) \le 1$.
Then $a = 1$ and after a homogeneous change of variables in $\KK[x_1, x_2, x_3]$ the derivations $\delta_1, \delta_2$ have one of the following forms:
$$\begin{aligned}
\text{Type 1.}\quad &\delta_i\colon\;
x_1 \;\mapsto\; 0, \quad
x_2 \;\mapsto\; \beta_i x_1^b, \quad
x_3 \;\mapsto\; \gamma_i x_1^c + \beta_i x_1^e x_2^d,
\quad\quad i = 1, 2;
\\
\text{Type 2.}\quad &\delta_i\colon\;
x_1 \;\mapsto\; 0, \quad
x_2 \;\mapsto\; \beta_i x_1^b, \quad
x_3 \;\mapsto\; \gamma_i x_1^c,
\quad\quad i = 1, 2,
\end{aligned}$$
where $c = bd + e$, $0 \le e < b$, $\beta_i, \gamma_i \in \KK$.
\end{proposition}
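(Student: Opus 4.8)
The plan is to exploit the structure of locally nilpotent derivations (LNDs) of degree zero on the positively graded ring $\KK[x_1,x_2,x_3]$, working one variable at a time in increasing degree order. First I would analyze the action on $x_1$. Since $\delta_i$ has degree zero, $\delta_i(x_1)$ is a homogeneous polynomial of degree $a$; but $a\le b\le c$ are the minimal degrees, so $\delta_i(x_1)$ must be a scalar multiple of $x_1$ (the only monomials of degree $a$ involving the other variables would require smaller degrees, which is impossible). By Lemma~\ref{lemFr}, $x_1\mid\delta_i(x_1)$ forces $\delta_i(x_1)=0$. Thus every $\delta_i$ kills $x_1$, so $x_1\in\Ker\delta_1\cap\Ker\delta_2$. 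The hypothesis $\dim(\Ker\delta_1\cap\Ker\delta_2\cap\langle x_1,x_2,x_3\rangle)\le 1$ then tells us the common kernel meets the linear span in exactly the line $\KK x_1$, a fact I would use later to pin down the $x_2$-data.

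Next I would treat $x_2$. Again $\delta_i(x_2)$ is homogeneous of degree $b$. The degree-$b$ monomials are spanned by $x_2$ itself and by $x_1^b$ (if $a=1$), or more generally monomials in $x_1$ of total degree $b$. Writing $\delta_i(x_2)\in\KK x_2 + \KK[x_1]$ and invoking Lemma~\ref{lem0} (with the roles of variables arranged so that $\delta_i$ preserves $\KK[x_1]$, which holds since $\delta_i(x_1)=0$), I get $\delta_i(x_2)\in\KK[x_1]$, i.e. $\delta_i(x_2)=\beta_i x_1^{b/a}$ when $a\mid b$ and $0$ otherwise. I claim $a=1$: if $a\ge 2$, then by coprimality of $a,b,c$ at least one of $b,c$ is not divisible by $a$; pushing this through to $x_3$ as well would force enough of the $\delta_i$ to vanish on $x_2$ and $x_3$ to make $\Ker\delta_1\cap\Ker\delta_2$ too large (containing a two-dimensional piece of $\langle x_1,x_2,x_3\rangle$), contradicting the hypothesis. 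Establishing $a=1$ cleanly from the dimension bound is the first genuinely delicate point. Once $a=1$, degrees $a,b,c$ are just $1\le b\le c$ and monomials are unconstrained, so $\delta_i(x_2)=\beta_i x_1^b$.

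The main work is $x_3$. Here $\delta_i(x_3)$ is homogeneous of degree $c=bd+e$ with $0\le e<b$. As before $\delta_i(x_3)\in\KK x_3+\KK[x_1,x_2]$ and Lemma~\ref{lem0} gives $\delta_i(x_3)\in\KK[x_1,x_2]$, a polynomial of degree $c$ in the weighting $\deg x_1=1,\deg x_2=b$. The crucial constraint is commutativity $[\delta_1,\delta_2]=0$. I would compute $(\delta_1\delta_2-\delta_2\delta_1)(x_3)$ explicitly: since $\delta_i$ annihilates $x_1$ and sends $x_2\mapsto\beta_i x_1^b$, applying $\delta_i$ to a monomial $x_1^p x_2^q$ produces $q\beta_i x_1^{p+b}x_2^{q-1}$, so the commutator bracket acting on $\delta_j(x_3)$ must cancel. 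Writing $\delta_i(x_3)=\sum_q c^{(i)}_q x_1^{c-bq}x_2^q$ and imposing the vanishing of $[\delta_1,\delta_2](x_3)$ yields a system of linear recurrences in the coefficients $c^{(i)}_q$. Solving this recurrence is where the binomial coefficients $\binom{d+1}{k}$ and the polynomial $Q_{b,c}$ emerge: the nonzero solution (Type~1) is forced to be proportional to the expansion of $(x_1^b x_2)$-type terms whose generating identity is exactly the closed form $\frac{(x_1^b y_2+y_1^b x_2)^{d+1}-\cdots}{x_1^{b-e}y_1^{b-e}}$ given before the theorem, while the degenerate solution (Type~2) has all the mixed coefficients vanishing, leaving only $\delta_i(x_3)=\gamma_i x_1^c$. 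I expect solving this commutativity recurrence and recognizing its solution as $Q_{b,c}$ to be the principal obstacle; the bookkeeping must simultaneously respect the grading (so only $e=0$ or the shift by $x_1^e x_2^d$ appears) and the requirement that $\delta_i$ remain locally nilpotent, which rules out any term genuinely depending on $x_3$ and confirms the two types exhaust all possibilities.
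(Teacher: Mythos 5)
There is a genuine gap, in fact two. First, your opening step fails whenever consecutive degrees coincide. From degree-zero alone, $\delta_i(x_1)$ is a multiple of $x_1$ only when $a<b$: if $a=b$ the homogeneous component of degree $a$ is $\langle x_1,x_2\rangle$, and if $a=b=c$ it is all of $\langle x_1,x_2,x_3\rangle$, so $\delta_i(x_1)$ may involve the other variables; similarly, when $b=c$ your claim $\delta_i(x_2)\in\KK x_2+\KK[x_1]$ is wrong because $x_3$ can occur in $\delta_i(x_2)$. The paper handles exactly this by splitting into the cases $a=b=c$, $a=b<c$, $a<b=c$, $a<b<c$ and, in the equal-degree cases, using that two \emph{commuting} nilpotent linear operators on the relevant span can be simultaneously strictly upper-triangularized by a linear change of variables; only after that change do you get $\delta_i(x_1)=0$ and $\delta_i(x_2)=\beta_i x_1^b$. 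Your sketch never invokes this simultaneous triangularization, and the assertion ``$x_1\in\Ker\delta_1\cap\Ker\delta_2$'' prior to any coordinate change is simply not available. (Your route to $a=1$ via coprimality and the kernel bound is fine in spirit and matches the paper's Cases C and D.)

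The more serious gap is at $x_3$: commutativity does \emph{not} force binomial coefficients or $Q_{b,c}$. With $\delta_i(x_1)=0$, $\delta_i(x_2)=\beta_i x_1^b$, $\delta_i(x_3)=P^{(i)}(x_1,x_2)$, one computes $[\delta_1,\delta_2](x_3)=x_1^b\bigl(\beta_1\frac{\pa P^{(2)}}{\pa x_2}-\beta_2\frac{\pa P^{(1)}}{\pa x_2}\bigr)$, so the commutation relation says only that the $x_2$-dependent parts are proportional: $P^{(i)}=\beta_i P+\gamma_i x_1^c$ for a single common but otherwise \emph{arbitrary} $P=\sum_{l=1}^{d}\xi_l x_1^{c-bl}x_2^l$ (here the hypothesis $\dim(\Ker\delta_1\cap\Ker\delta_2\cap\langle x_1,x_2,x_3\rangle)\le 1$ is what excludes $\beta_1=\beta_2=0$). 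Every such $P$ yields a commuting pair of homogeneous LNDs, so your ``recurrence'' has a $d$-parameter solution space, not just the two types; the normal form cannot fall out of commutativity. The missing idea is the paper's Step 3: the non-linear homogeneous substitution $\ti x_3=\alpha\bigl(x_3-\sum_{l=1}^{d-1}\frac{\xi_l}{l+1}x_1^{c-b(l+1)}x_2^{l+1}\bigr)$, which absorbs all lower terms of $P$ into $x_3$ and leaves only the top monomial, rescaled to $\beta_i x_1^e x_2^d$ when $\xi_d\ne 0$ (Type 1) and to $0$ when $\xi_d=0$ (Type 2). Finally, you have misplaced $Q_{b,c}$: the binomial coefficients arise later, in the proof of Theorem~\ref{A3}, when the normal-form derivations are \emph{exponentiated} to obtain the monoid multiplication; they play no role in the normal form of the derivations themselves, which is the content of Proposition~\ref{prop2der}.
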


\begin{proof}
{\bf Step 1.} Let us prove that after a linear change of variables in $\KK[x_1, x_2, x_3]$ the derivations $\delta_1, \delta_2$ have the form
\begin{equation} \label{eqchange1}
\delta_i\colon x_1 \mapsto 0, \quad
x_2 \mapsto \beta_i x_1^b, \quad
x_3 \mapsto P_{b, c}^{(i)}(x_1, x_2),
\quad\quad i = 1, 2,
\end{equation}
where $P_{b, c}^{(i)}$ is a polynomial of the form $\sum \limits_{l = 0}^d \xi_{i l} x_1^{c - bl} x_2^l$. We proceed with four cases.

\emph{Case A.} $a = b = c$

Since $a, b, c$ are coprime we have $a = b = c = 1$. The condition $\deg \delta_i = 0$, $i = 1, 2$, implies $\delta_i(x_1), \, \delta_i(x_2), \, \delta_i(x_3) \in \langle x_1, x_2, x_3\rangle$. This means that each derivation $\delta_i$ induces a linear transformation of $\langle x_1, x_2, x_3\rangle$, which is nilpotent as $\delta_i$ is locally nilpotent. Since the two induced linear transformations are nilpotent and commute, there is a change of basis $x_1, x_2, x_3$ that makes the corresponding transformation matrices strongly upper triangular simultaneously. Thus we can assume that
$$
\begin{array}{rl}
\delta_i\colon
& x_1 \mapsto 0 \\
& x_2 \mapsto \beta_i x_1 \\
& x_3 \mapsto \gamma_i x_1 + \xi_i x_2
\end{array},
\quad\quad \beta_i,\gamma_i,\xi_i\in \KK, \quad i = 1,2.
$$

\emph{Case B.} $a = b < c$

Note that $\delta_i(x_1), \, \delta_i(x_2) \in \langle x_1, x_2\rangle $. As in Case A, the derivations $\delta_i$ induce commuting nilpotent linear transformations of $\langle x_1, x_2\rangle$ and after a change of basis $x_1, x_2$ we have
$$\delta_i\colon
x_1 \mapsto 0, \quad
x_2 \mapsto \beta_i x_1,
\quad\quad i = 1, 2.$$

If $a \nmid c$ then $\delta_i(x_3) \in \KK[x_1, x_2, x_3]_c = \langle x_3\rangle$, where $\KK[x_1, x_2, x_3]_c$ denotes the homogeneous component of degree~$c$. Lemma~\ref{lemFr} implies $\delta_i(x_3) = 0$. Then $x_1, x_3 \in \Ker\delta_1 \cap \Ker\delta_2$, which contradicts the assumptions of the proposition.

If $a \mid c$ then $a = b = 1$ and $\delta_i(x_3) \in \KK[x_1, x_2, x_3]_c = \langle x_3\rangle \oplus \langle x_1^{c-l} x_2^l \mid 0 \le l \le c\rangle$. By Lemma~\ref{lem0} we have
$$
\delta_i\colon
x_1 \mapsto 0, \quad
x_2 \mapsto \beta_i x_1, \quad
x_3 \mapsto \sum\limits_{l=0}^c \xi_{i l} \, x_1^{c-l} x_2^l,
\quad\quad i = 1, 2.$$

\smallskip

\emph{Case C.} $a < b = c$

From $a < b$ we have $\delta_i(x_1) \in \langle x_1\rangle$, which implies $\delta_i(x_1) = 0$ by Lemma~\ref{lemFr}.

Let $a \nmid b$. Then $\delta_i(x_2), \, \delta_i(x_3) \in \langle x_2, x_3\rangle$ and the derivations $\delta_i$ induce linear transformations of $\langle x_2, x_3\rangle$. As above, there is a change of basis in $\langle x_2, x_3\rangle$ such that
$$
\delta_i\colon
x_1 \mapsto 0, \quad
x_2 \mapsto 0, \quad
x_3 \mapsto \xi_i x_2,
\quad\quad i = 1, 2,$$
which implies $x_1, x_2 \in \Ker \delta_1 \cap \Ker \delta_2$, a contradiction.

If $a \mid b$ then $a = 1$. Note that $\delta_i(x_2), \, \delta_i(x_3) \in \langle x_1^b, x_2, x_3\rangle$, and $\delta_i(x_1^b) = 0$. It follows that the derivations $\delta_i$ induce commuting nilpotent linear transformations of $\langle x_1^b, x_2, x_3\rangle$. Their projections $\ppr \delta_i$ on $\langle x_2, x_3\rangle$ commute and are nilpotent as well. After a change of basis in $\langle x_2, x_3\rangle$ we obtain
$$
\ppr \delta_i\colon
x_2 \mapsto 0, \quad
x_3 \mapsto \xi_i x_2,
\quad\quad i = 1, 2.$$
For initial derivations this means
$$
\delta_i\colon
x_1 \mapsto 0, \quad
x_2 \mapsto \beta_i x_1^b, \quad
x_3 \mapsto \gamma_i x_1^b + \xi_i x_2,
\quad\quad i = 1, 2.$$

\smallskip

\emph{Case D.} $a < b < c$

As in Case C, the condition $a < b$ implies $\delta_i(x_1) = 0$, $i = 1, 2$. If $a \nmid b$ then $\delta_i(x_2) \in \langle x_2\rangle$, whence by Lemma~\ref{lemFr} we see that $\delta_i(x_2) = 0$. In this case, $x_1, x_2 \in \Ker\delta_1 \cap \Ker\delta_2$, a contradiction.

Thus $a \mid b$. If $a \nmid c$, then $\KK[x_1, x_2, x_3]_c = \langle x_3\rangle$ and $x_1, x_3 \in \Ker\delta_1 \cap \Ker\delta_2$. Consequently $a, b, c$ are divided by $a$, whence $a = 1$. Then we have $\delta_i(x_2) \in \KK[x_1, x_2, x_3]_b = \langle x_2, x_1^b\rangle$, and by Lemma~\ref{lem0}
$$
\delta_i\colon
x_1 \mapsto 0, \quad
x_2 \mapsto \beta_i x_1^b,
\quad\quad i = 1, 2.$$

Let $c = bd + e$, $0 \le e < b$. Then $\KK[x_1, x_2, x_3]_c = \langle x_3\rangle \oplus \langle x_1^{c-bl} x_2^l \mid 0 \le l \le d\rangle$ and by Lemma~\ref{lem0}
$$
\delta_i\colon
x_1 \mapsto 0, \quad
x_2 \mapsto \beta_i x_1^b, \quad
x_3 \mapsto P^{(i)}_{b, c}(x_1, x_2),
\quad\quad i = 1, 2,$$
where $P^{(i)}_{b, c}$ is a polynomial of the form $\sum \limits_{l = 0}^d \xi_{i l} x_1^{c - bl} x_2^l$.

\medskip

{\bf Step 2.} One can see that the derivations $\delta_1$ and $\delta_2$ of form~\eqref{eqchange1} commute if and only if the condition $\beta_2\dfrac{\pa P^{(1)}_{b, c}}{\pa x_2}(x_1, x_2) = \beta_1\dfrac{\pa P^{(2)}_{b, c}}{\pa x_2}(x_1, x_2)$ holds. This implies that the corresponding coefficients of $\beta_2 P_{b, c}^{(1)}(x_1, x_2)$ and $\beta_1 P_{b, c}^{(2)}(x_1, x_2)$ are equal except of coefficients at~$x_1^c$, that is
$P_{b, c}^{(i)}(x_1, x_2) = \beta_i P_{b, c}(x_1, x_2) + \gamma_i x_1^c, \; i = 1, 2,$
for some $P_{b, c}(x_1, x_2) = \sum \limits_{l = 1}^d \xi_l x_1^{c - bl} x_2^l$. Thus
\begin{equation} \label{eqchange2}
\delta_i\colon
x_1 \mapsto 0, \quad
x_2 \mapsto \beta_i x_1^b, \quad
x_3 \mapsto \beta_i P_{b, c}(x_1, x_2) + \gamma_i x_1^c,
\quad\quad i = 1, 2.
\end{equation}

\medskip

{\bf Step 3.} If $d=1$ we may assume that $\xi_1=1$ or $\xi_1=0$, and derivations~(\ref{eqchange2}) already have the desired form. Assume that $d\ge 2$ and consider the homogeneous change of coordinates in $\AA^3$
$$
(\ti x_1, \ti x_2, \ti x_3) =
\biggl(x_1, \; x_2, \; \alpha\Bigl(x_3 - \sum\limits_{l = 1}^{d-1}\dfrac{\xi_l}{l+1}x_1^{c - b(l+1)}x_2^{l+1}\Bigr)\biggr),
$$
where $\alpha \ne 0$ is a non-zero parameter. Under this change the derivations $\delta_1$ and $\delta_2$ in~\eqref{eqchange2} become
\begin{gather} \label{eqchange3}
\begin{aligned}
&\delta_i\colon\;\;
\ti x_1 \;\mapsto\; 0
\\
&\ti x_2 \;\mapsto\; \beta_i x_1^b = \beta_i \ti x_1^b
\\
&\ti x_3 \;\mapsto\; \alpha\Bigl(\beta_i \sum \limits_{l = 1}^d \xi_l x_1^{c-bl}x_2^l + \gamma_i x_1^c
- \sum\limits_{l = 1}^{d-1}\dfrac{\xi_l}{l+1} x_1^{c-b(l+1)} (l+1)x_2^l \beta_i x_1^b\Bigr) =
\alpha\bigl(\beta_i\xi_d \ti x_1^e \ti x_2^d + \gamma_i \ti x_1^c\bigr)\quad
\end{aligned}\raisetag{3\baselineskip}
\end{gather}
with $i = 1, 2$. It remains to let $\alpha = 1/\xi_d$ if $\xi_d \ne 0$, and $\alpha = 1$ otherwise. The former case leads to derivations of Type 1, while in the latter case we obtain derivations of Type 2. This completes the proof of Proposition~\ref{prop2der}.
\end{proof}

\begin{proof}[Proof of Theorem~\ref{A3}]
Classification of monoids of rank 0, 2, and 3 follows from Proposition~\ref{corank01n}.

\smallskip

Consider a monoid of rank 1 without zero. By Lemma~\ref{l2l}, such a monoid is isomorphic
either to $S_0\times\GG_a^2$ with $\dim S_0=1$ or to $S_0\times \GG_a$ with $\dim S_0=2$, and the rank of $S_0$ equals~1 in both cases.
So we come to the monoid $M \underset{b}{+} A \underset{c}{+} A$ with $b=c=0$ in the first case
and with $b=0$, $c>0$ in the second case.

\smallskip

It remains to classify monoids of rank 1 with zero. This means that $G = \GG_m \times \GG_a^2$
and the action of $\GG_m$ on $\AA^3$ has a unique closed orbit. By Lemma~\ref{l3l}, we may assume that the action is linear. This action corresponds to a $\ZZ$-grading on $\KK[x_1, x_2, x_3]$ such that the variables $x_1,x_2,x_3$ are homogeneous and have like-sign degrees $a,b,c$.
We assume that $0<a \le b \le c$. Since the torus action is effective it follows that the numbers $a, b, c$ are coprime.

By Lemma~\ref{grLND}, we have two commuting locally nilpotent derivations $\delta_1, \delta_2$ of degree zero on $\KK[x_1, x_2, x_3]$ such that the corresponding action of $\GG_m \times \GG_a^2$ on $\AA^3$ is effective and has an open orbit.

If the dimension of $\Ker\delta_1 \cap \Ker\delta_2 \cap \langle x_1, x_2, x_3\rangle$ is at least $2$ then there are two non-proportional linear functions on $\AA^3$ remaining constant under the $\GG_a^2$-action, whence the $G$-action has no open orbit.

Thus according to Proposition~\ref{prop2der} we obtain that the derivations $\delta_i$ have the form
$$\delta_i\colon\;
x_1 \;\mapsto\; 0, \quad
x_2 \;\mapsto\; \beta_i x_1^b, \quad
x_3 \;\mapsto\; \gamma_i x_1^c+ \alpha\beta_i x_1^e x_2^d,
\quad\quad i = 1, 2,$$
where $\alpha=0$ for Type~2 and, multiplying the variable $x_3$ by a suitable scalar we may assume that $\alpha=d+1$ for Type~1.

Let us find the corresponding action of the group $G$. The action of $\GG_a^2$ corresponds to locally nilpotent derivations in the Lie algebra generated by $\delta_1$ and $\delta_2$.
Since $G$ has an open orbit it follows that the tuples $(\beta_1, \beta_2)$ and $(\gamma_1, \gamma_2)$ are non-proportional. One can make a linear change of $\delta_1, \delta_2$ and assume that
$$\begin{aligned}
\delta_1&\colon
x_1 \;\mapsto\; 0, \quad
x_2 \;\mapsto\; x_1^b, \quad
x_3 \;\mapsto\; \alpha x_1^e x_2^d,
\\
\delta_2&\colon
x_1 \;\mapsto\; 0, \quad
x_2 \;\mapsto\; 0, \quad
x_3 \;\mapsto\; x_1^c.
\end{aligned}$$

Let $(t, \alpha_1, \alpha_2) \in G$. One can check that the $G$-action is given by
\begin{equation} \label{A3act}
\begin{aligned}
t \exp(\alpha_1\delta_1 + \alpha_2\delta_2)\colon \;
x_1 & \mapsto tx_1,\\
x_2 & \mapsto t^b \left[x_2 + \alpha_1 x_1^b \right], \\
x_3 & \mapsto t^c \Bigl[x_3 + \alpha_2 x_1^c +
\dfrac{\alpha}{d+1} \sum_{k = 1}^{d+1} \binom{d+1}{k} \alpha_1^k x_1^{e + b(k - 1)} x_2^{d-k+1}
\Bigr].
\end{aligned}
\end{equation}
It has an open orbit $\{x_1 \ne 0\}$. Then a $G$-embedding is given by the identification
$$\begin{aligned}
\GG_m \times \GG_a^2 & \longleftrightarrow \{x_1 \ne 0\} \subset \AA^3, \\
(t, \alpha_1, \alpha_2) & \longleftrightarrow (y_1, y_2, y_3) = (t, \alpha_1, \alpha_2) \circ (1, 0, 0) = \Bigl(t, \; t^b\alpha_1, \; t^c \Bigl[
\alpha_2 + \dfrac{\alpha}{d+1} \alpha_1^{d+1}\Bigr]\Bigl).
\end{aligned}$$
Substituting
$$
t = y_1, \quad \alpha_1 = y_1^{-b} y_2, \quad
\alpha_2 = y_1^{-c} y_3 - \dfrac{\alpha}{d+1}y_1^{-b(d+1)}y_2^{d+1}
$$
in~\eqref{A3act} we obtain a multiplication in $\AA^3$:
$$\begin{aligned}
&(y_1, y_2, y_3) * (x_1, x_2, x_3) =
\biggl(y_1x_1, \;\; y_1^b(x_2 + y_1^{-b} y_2 x_1^b), \\
&y_1^c \Bigl[x_3 + \Bigl(y_1^{-c}y_3 - \dfrac{\alpha}{d+1}y_1^{-b(d+1)} y_2^{d+1}\Bigr) x_1^c +
\dfrac{\alpha}{d+1} \sum_{k = 1}^{d+1} \binom{d+1}{k} y_1^{-bk} y_2^k x_1^{e + b(k - 1)} x_2^{d-k+1}
\Bigr]\biggr) =
\\
&= \Bigl(x_1y_1, \;\; x_1^by_2+y_1^bx_2, \;\;
x_1^c y_3 + y_1^c x_3 +
\dfrac{\alpha}{d+1}Q_{b, c}(x_1, y_1, x_2, y_2)\Bigl).
\end{aligned}$$
So with $\alpha=d+1$ we come to the monoid $M \underset{b}{+} A \underset{b,c}{+} A$, where
$1\le b\le c$, and with $\alpha=0$ we obtain the monoid $M \underset{b}{+} A \underset{c}{+} A$, where $1\le b\le c$.

\smallskip

Let us show that every two monoids of different types or of the same type with different values of parameters in Theorem~\ref{A3} are non-isomorphic. We may assume that these monoids have the same rank. Monoids of rank~0 and of rank~3 are unique, and the case of rank~2 follows from Proposition~\ref{corank01n}.

It remains to consider monoids of rank~1. If they do not have zero, they can be distinguished by the structures of the base and the fiber of the quotient morphism
$S\to S/\!/T$.

Assume now that both monoids have zero. Here the positive integers~$b, c$ are uniquely determined by the structure of the graded algebra on $\KK[x_1,x_2,x_3]$ defined by the
$T$-action on $S$. Thus we have to prove that the monoids $M \underset{b}{+} A \underset{b,c}{+} A$ and $M \underset{b}{+} A \underset{c}{+} A$ for the same $b, c$ are non-isomorphic. By~\eqref{A3act}, these monoids correspond to actions~\eqref{A3act_1} and~\eqref{A3act_2} respectively:
\begin{align} \label{A3act_1}
&\begin{aligned}
\phi_1(t, \alpha_1, \alpha_2)\colon \;
x_1 & \mapsto tx_1,\\
x_2 & \mapsto t^b \left[x_2 + \alpha_1 x_1^b \right], \\
x_3 & \mapsto t^c \Bigl[x_3 + \alpha_2 x_1^c + \sum_{k = 1}^{d+1} \binom{d+1}{k} \alpha_1^k x_1^{e + b(k - 1)} x_2^{d-k+1}
\Bigr];
\end{aligned}
\\
\label{A3act_2}
&\begin{aligned}
\phi_2(t, \alpha_1, \alpha_2)\colon \;
x_1 & \mapsto tx_1,\\
x_2 & \mapsto t^b \left[x_2 + \alpha_1 x_1^b \right], \\
x_3 & \mapsto t^c \Bigl[x_3 + \alpha_2 x_1^c\Bigr].
\end{aligned}
\end{align}
Consider the homogeneous component $\KK[x_1, x_2, x_3]_c = \langle x_3\rangle \oplus \langle x_1^{c-bl} x_2^l \mid 0 \le l \le d\rangle$ and the action of the unipotent part $\GG_a^2$ of the group $G(S)$ on it. Let us prove that for action~\eqref{A3act_1} there exists a unique one-parameter subgroup in $\GG_a^2$ which is a stabilizer of an element of $\KK[x_1, x_2, x_3]_c$, while for action~\eqref{A3act_2} there are infinitely many such one-parameter subgroups.

Let $f = \lambda x_3 + f_0 = \lambda x_3 + \sum\limits_{l = 0}^d \lambda_l x_1^{c-bl} x_2^l \in \KK[x_1, x_2, x_3]_c$.
Then $\phi_1(1, -\alpha_1, -\alpha_2)(f)$ equals
\[
\lambda\Bigl[x_3 + \alpha_2 x_1^c + \sum_{k = 1}^{d+1} \binom{d+1}{k} \alpha_1^k x_1^{e + b(k - 1)} x_2^{d-k+1}
\Bigr] + \sum_{l = 0}^d \lambda_l x_1^{c - bl} (x_2 + \alpha_1x_1^b)^l.
\]
Equating coefficients in the condition $\phi_1(1, -\alpha_1, -\alpha_2)(f) = f$ at $x_1^{c-bm} x_2^m$, $0 \le m \le d$, we obtain the following system of linear equations in $\lambda$ and $\lambda_l$, $1 \le l \le d$:
\[
\begin{pmatrix}
\alpha_1 & \alpha_1^2 & \alpha_1^3 & \hdotsfor{1} & \quad\alpha_1^{d\phantom{+0}} & \alpha_2 + \alpha_1^{d+1}
\\
0\; & \binom{2}{1} \alpha_1 & \binom{3}{1} \alpha_1^2 & \hdotsfor{1} & \binom{d}{1} \alpha_1^{d-1} & \binom{d+1}{1} \alpha_1^{d\phantom{+0}}
\\
0\; & \quad0\quad & \;\binom{3}{2} \alpha_1 & \hdotsfor{1} & \binom{d}{2} \alpha_1^{d-2} & \binom{d+1}{2} \alpha_1^{d-1}
\\
\hdotsfor{6}
\\
0\; & \hdotsfor{2} & \quad\;0 & \binom{d}{d-1}\alpha_1 & \binom{d+1}{d-1} \alpha_1^{2\phantom{+0}}
\\
0\; & \hdotsfor{2} & \quad\;0 & \quad0\quad\;\; & \binom{d+1}{d} \alpha_1^{\phantom{1+0}} &
\end{pmatrix}
\begin{pmatrix}
\lambda_1 \\ \lambda_2 \\ \lambda_3 \\ \ldots \\ \lambda_d \\ \lambda
\end{pmatrix}
=
\begin{pmatrix}
0 \\ 0 \\ 0 \\ \ldots \\ 0 \\ 0	
\end{pmatrix}.
\]
Here the coefficient $\lambda_0$ is arbitrary and we can assume that $\lambda_0 = 0$.
We claim that the stabilizer of $f$ in the unipotent part $\GG_a^2$ of the group $G(S)$ consists of elements $(\alpha_1,\alpha_2)$ with
\begin{enumerate}
\item \label{en0} $\alpha_1 = \alpha_2 = 0$ if $\lambda \ne 0$; 
\item \label{enK} $\alpha_1 = 0$ if $\lambda = 0$, $f_0 \ne 0$; 
\item \label{enK2} arbitrary $\alpha_1, \alpha_2$ if $\lambda = 0$, $f_0 = 0$; 
\end{enumerate}

Indeed, the condition $\lambda \ne 0$ with the last equation of the system implies $\alpha_1 = 0$. Then the first equation yields $\lambda\alpha_2 = 0$, whence $\alpha_2 = 0$.
In the case $\lambda = 0$ we get a system of $n-1$ linear equations in $\lambda_l$, $1 \le l \le d$. Since at least one $\lambda_l$ is non-zero, the corresponding leading principal minor equals $0$, whence $\alpha_1 = 0$.

Thus the stabilizer for action~\eqref{A3act_1} is one-parameter whenever it is equal to $\{\alpha_1 = 0\}$.

\medskip

For action~\eqref{A3act_2}, the polynomials $x_3+\lambda_1x_1^{c-b}x_2$ have unipotent one-parameter stabilizers $\{\alpha_2+\lambda_1\alpha_1 = 0\}$ that are non-equal for different $\lambda_1$. Consequently, the monoids $M \underset{b}{+} A \underset{b,c}{+} A$ and $M \underset{b}{+} A \underset{c}{+} A$ are non-equivalent.
This completes the proof of Theorem~\ref{A3}.
\end{proof}

\begin{remark}
Under the assumption that $b$ divides $c$ for $M \underset{b}{+} A \underset{c}{+} A$ the action of the unipotent part $\GG_a^2$ of $G(S)$ on $\AA^3\setminus G(S)$ is trivial, while for $M \underset{b}{+} A \underset{b,c}{+} A$ it is not.
\end{remark}


\section{Bilinear monoids and finite-dimensional algebras}
\label{sec5}
We say that an algebraic monoid $S$ on $\AA^n$ is bilinear if the multiplication
$$
\mu\colon S\times S\to S, \quad \mu(a,b)=ab
$$
is a bilinear map in some coordinates on $\AA^n$. In this case the space $S$ has a structure of a finite-dimensional associative algebra with unit. Conversely, for every finite-dimensional associative  algebra $A$ with unit the multiplicative monoid $(A,*)$ is a bilinear monoid.

\begin{lemma} \label{l16l}
Let $A_1$ and $A_2$ be finite dimensional associative algebras with unit. Assume that the multiplicative monoids $(A_1,*)$ and $(A_2,*)$ are isomorphic. Then the algebras $A_1$ and $A_2$ are isomorphic as well.
\end{lemma}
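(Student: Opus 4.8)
The plan is to show that the linear part of the monoid isomorphism \emph{at the zero element} is already an isomorphism of algebras. Let $\phi\colon (A_1,*)\to(A_2,*)$ be the given isomorphism of algebraic monoids; in particular $\phi$ is an isomorphism of affine varieties $\AA^n\to\AA^n$, hence is given by polynomials, and it preserves the distinguished elements of the monoid structure. First I would record that $\phi$ sends the unit to the unit and the zero to the zero: the unit is the neutral element, and the zero $0_i$ is the unique absorbing element of $(A_i,*)$, characterized by $0_i*a=0_i$ for all $a$; both are preserved by any monoid isomorphism. Since $\phi(0_1)=0_2$, the differential at the origin is a well-defined linear map $M:=\mathrm{d}\phi_{0_1}\colon A_1\to A_2$, and because $\phi$ is an isomorphism of varieties, $M$ is a linear isomorphism.

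The crucial point is to differentiate the multiplicativity relation at the zero element. Writing $\phi(z)=Mz+(\text{terms of degree}\ge 2)$ near the origin and using that the multiplication $*$ is \emph{bilinear} on each $A_i$, I would substitute $sa$ and $sb$ for fixed $a,b\in A_1$ into $\phi(a*b)=\phi(a)*\phi(b)$, obtaining the polynomial identity in $s$
\[
\phi\bigl(s^2(a*b)\bigr)=\phi(sa)*\phi(sb).
\]
On the left, bilinearity of $*$ gives $s^2(a*b)$, so the lowest-order term is $s^2\,M(a*b)$; on the right, $\phi(sa)=sMa+O(s^2)$ and $\phi(sb)=sMb+O(s^2)$, so bilinearity of the multiplication in $A_2$ makes the lowest-order term $s^2\bigl(M(a)*M(b)\bigr)$. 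Comparing the coefficients of $s^2$ yields $M(a*b)=M(a)*M(b)$ for all $a,b\in A_1$, so $M$ is multiplicative.

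It then remains to check that $M$ respects the units, which follows formally: $M(e_1)$ is idempotent, since $M(e_1)*M(e_1)=M(e_1*e_1)=M(e_1)$, and $M(e_1)*M(x)=M(x)$ for every $x$; as $M$ is surjective, this forces $M(e_1)=e_2$. Hence $M$ is a linear, bijective, unital, multiplicative map, that is, an isomorphism of associative algebras, and therefore $A_1\cong A_2$.

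The main subtlety, and where the real content lies, is the choice of base point. Differentiating at the unit, where $G(A_i)$ is a group and $\phi$ restricts to a group isomorphism, only produces a Lie-algebra isomorphism for the commutator bracket and does not recover the associative product, because the quadratic part of $\phi$ contaminates the second-order comparison. Working at the zero element instead makes the multiplication vanish to second order in the scaling parameter, so the leading term isolates exactly the bilinear product and the linear part $M$ of $\phi$ inherits multiplicativity on the nose. The only things to verify carefully are that $\phi$ is genuinely polynomial with $\phi(0_1)=0_2$ and that the order-$s^2$ comparison is legitimate, both of which are immediate.
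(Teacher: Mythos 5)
Your proof is correct and takes essentially the same route as the paper: the paper likewise matches up the zeros, writes the monoid isomorphism as a polynomial automorphism $f$ with $f(0)=0$, and passes to its linear part $h$ (your $M=\mathrm{d}\phi_{0_1}$), your scaling-in-$s$ comparison of $s^2$-coefficients being exactly the verification, via bilinearity of both products, of the paper's terse claim that $f^{-1}(f(x)*f(y))=h^{-1}(h(x)*h(y))$. Your explicit check that $M$ sends unit to unit fills in a step the paper leaves implicit.
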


\begin{proof}
Every bilinear monoid has zero. So we identify both $A_1$ and $A_2$ with $\AA^n$ sending zeroes to the origin. The first bilinear operation is given by
$$
x*y=\bigl(\sum\gamma^{(1)}_{ij}x_iy_j,\ldots,\sum\gamma^{(n)}_{ij}x_iy_j\bigr)
$$
with some $\gamma^{(k)}_{ij}\in\KK$, while the second one is $f^{-1}(f(x)*f(y))$
for some polynomial automorphism $f\colon\AA^n\to\AA^n$ with $f(0)=0$. Every automorphism
$f$ can be represented by $n$-tuple of polynomials $(f_1(x),\ldots,f_n(x))$. Let us replace $f$
by a linear automorphism $h$, where $h_i(x)$ is equal to the linear part of $f_i(x)$ for
all $i=1,\ldots,n$. Since the second multiplication is bilinear, the multiplications
$f^{-1}(f(x)*f(y))$ and $h^{-1}(h(x)*h(y))$ coincide. Hence the automorphism $h$ establishes
an isomorphism between the algebras $A_1$ and $A_2$.
\end{proof}

Let us give a characterization of bilinear monoids in terms of group embeddings.

\begin{proposition} \label{propmul}
Let $S$ be an algebraic monoid on the affine space $\AA^n$. Then $S$ is bilinear
if and only if the action of the group $G(S)\times G(S)$ on $\AA^n$ by left and right multiplication is linearizable.
\end{proposition}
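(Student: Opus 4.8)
The plan is to prove the two implications separately; the forward direction is immediate, and the converse reduces to a short density argument. For the forward direction I would assume $S$ is bilinear, so that in suitable coordinates $\AA^n$ carries the structure of a finite-dimensional associative algebra with unit, the product $\mu(a,b)=ab$ is bilinear, and $G(S)$ is the group of units. The two-sided action of $G(S)\times G(S)$ is $(g_1,g_2)\cdot x = g_1xg_2^{-1}$, which for fixed $(g_1,g_2)$ is the composition of left multiplication by $g_1$ and right multiplication by $g_2^{-1}$; since the product is bilinear, both maps are linear, so the action is already linear in these coordinates and hence linearizable.

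For the converse I would assume the action of $G(S)\times G(S)$ is linearizable, say by an automorphism $\varphi$ of $\AA^n$. Replacing $S$ by the isomorphic monoid obtained by transporting the multiplication through $\varphi$, I may assume that the two-sided action is itself linear; the property of being bilinear is invariant under such a monoid isomorphism, so this reduction is harmless. Then for every invertible $a\in G(S)$ the map $x\mapsto\mu(a,x)$ is exactly the action of $(a,e)$, and for every invertible $b\in G(S)$ the map $x\mapsto\mu(x,b)$ is the action of $(e,b^{-1})$; by the reduction both maps are linear.

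It remains to upgrade linearity in each variable over the dense open subset $G(S)$ to global bilinearity. I would write the polynomial map as $\mu(a,b)=\sum_{\beta}Q_\beta(a)\,b^\beta$, the sum over multi-indices $\beta$ in the $b$-variables with $Q_\beta(a)\in\KK^n$ polynomial in $a$. Linearity of $x\mapsto\mu(a,x)$ for each fixed $a\in G(S)$ means $Q_\beta(a)=0$ whenever $|\beta|\ne 1$; since $G(S)$ is open and nonempty in the irreducible space $\AA^n$, hence Zariski dense, and the $Q_\beta$ are polynomials, I get $Q_\beta\equiv 0$ for $|\beta|\ne 1$, so $\mu$ is homogeneous of degree one in $b$. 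The same argument applied to the right multiplications shows that $\mu$ is homogeneous of degree one in $a$. A polynomial map homogeneous of degree one in each of the two groups of variables is bihomogeneous of bidegree $(1,1)$, that is, bilinear, so $S$ is bilinear.

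The only delicate points are the reduction to a linear action and the density step. For the first I must check that transporting the monoid law through the linearizing automorphism yields an isomorphic monoid without affecting bilinearity, and that under this transport left and right multiplications by invertible elements correspond precisely to the action of $(a,e)$ and $(e,b^{-1})$; this rests on the group-embedding structure, namely that left and right multiplications by elements of $G(S)$ extend to the $G(S)$-actions on $S$. The density step is the genuine engine of the argument: it is what converts the linearity of $\mu(a,\cdot)$ for $a$ ranging only over the open subgroup $G(S)$ into the vanishing of all higher-degree coefficients as polynomial identities, and thus into bilinearity of $\mu$ on all of $\AA^n\times\AA^n$.
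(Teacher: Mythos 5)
Your proposal is correct and in substance coincides with the paper's own argument: both directions rest on the facts that left and right translations by elements of $G(S)$ act linearly after the linearization and that $G(S)$ is Zariski dense in $\AA^n$. The paper merely phrases your coefficient-vanishing density step dually, via the comorphism of $\mu$: it sends linear forms into $(\KK[G]\otimes(\AA^n)^*)\cap((\AA^n)^*\otimes\KK[G])=(\AA^n)^*\otimes(\AA^n)^*$, which is exactly your observation that a polynomial map linear in each variable separately over the dense subset $G(S)\times G(S)$ must be bilinear.
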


\begin{proof}
Assume that the group $G(S)\times G(S)$ acts on $\AA^n$ linearly. The multiplication map $\AA^n\times\AA^n\to\AA^n$ is given by the comorphism $\KK[\AA^n]\to\KK[\AA^n]\otimes\KK[\AA^n]$. Since the action of $G(S)\times G(S)$ on $\AA^n$ is linear, for the restriction of the comorphism to the subspace $(\AA^n)^*\subseteq\KK[\AA^n]$ of all linear functions on $\AA^n$ we have
$$
(\AA^n)^*\to \KK[G]\otimes (\AA^n)^* \quad \text{and} \quad (\AA^n)^*\to (\AA^n)^*\otimes\KK[G].
$$
So the image of $(\AA^n)^*$ is contained in the intersection $(\KK[G]\otimes (\AA^n)^*)\cap((\AA^n)^*\otimes\KK[G])=(\AA^n)^*\otimes (\AA^n)^*$. Hence the multiplication on $\AA^n$ is given by the linear map $\AA^n \otimes \AA^n\to \AA^n$ dual to $(\AA^n)^*\to (\AA^n)^*\otimes (\AA^n)^*$. This proves that the monoid $S$ is bilinear.

The converse implication is straightforward.
\end{proof}

Since every action with an open orbit of a reductive group on an affine space is linearizable~\cite{Lu}, we conclude that every reductive monoid on $\AA^n$ is bilinear. Clearly, the multiplicative monoid of a finite dimensional algebra $A$ is reductive if and only if $A$ is a semisimple $A$-module. By the Artin-Wedderburn Theorem, this is the case if and only if $A$ is a direct sum of matrix algebras $\text{Mat}(n_i\times n_i,\KK)$.

We know that monoids with an affine algebraic group $G$ as the group of invertible elements are precisely the embeddings of $G$ into affine varieties. In particular, for bilinear monoids on affine spaces we obtain a prehomogeneous $G$-module structure on $\AA^n$ with trivial generic stabilizer. Since for a commutative group $G$ there is no difference between left and right multiplications, here the inverse implication holds. Namely, every faithful prehomogeneous module of a commutative group $G$ carries a structure of a commutative algebra with unit. The following example shows that already for solvable groups this is not the case.

\begin{example}
Let us consider the group
$$
G=\left\{
\begin{pmatrix}
t & a \\
0 & t^{-1}
\end{pmatrix}, \
t\in\GG_m, \ a\in\GG_a\right\}
$$
and its tautological module $\KK^2$. The orbit  of the vector $(0,1)$ is open in $\KK^2$, it consists of the vectors $(a,t^{-1})$ or, equivalently, of the vectors $(x,y)$, $y\ne 0$. The right multiplication by an element
$$
\begin{pmatrix}
s & b \\
0 & s^{-1}
\end{pmatrix}^{-1}
$$
gives the vector $(sa-tb,t^{-1}s)$ or, equivalently, $(sx-by^{-1},sy)$. Such an action can not be extended to $\KK^2$.
\end{example}

It is worth noting that bilinear monoids define group embeddings into projective spaces. Namely, let $A$ be a finite-dimensional associative algebra with unit and $C$ be the subgroup of nonzero scalars in $G(A)$. Then we have an open embedding $G(A)/C\hookrightarrow \PP(A)$. By~\cite[Proposition~5.1]{KL}, all open equivariant embeddings of commutative affine algebraic groups into projective spaces appear this way.

Let us consider commutative bilinear monoids in more details. It is well known that every $n$-dimensional commutative associative algebra $A$ with unit admits a decomposition $A=A_1\oplus\ldots\oplus A_r$ into a direct sum of local algebras $A_i$ with maximal ideals $\mm_i$, see~\cite[Theorem~8.7]{AM}. Such a decomposition is unique up to order of summands. Moreover, every algebra $A_i$ decomposes as a vector space to $\KK\oplus\mm_i$, all elements in $\mm_i$ are nilpotent and all elements in $A_i\setminus\mm_i$ are invertible. In particular, the group of invertible elements $G(A_i)$ equals $\KK^{\times}\oplus\mm_i$. It is a connected commutative affine algebraic group of rank~1; an isomorphism  $\KK^{\times}\oplus\mm\to \GG_m\times\GG_a^s$ is given by $(\lambda, x)\mapsto (\lambda,\ln(1+\lambda^{-1}x))$. In general, the group $G(A)$ is isomorphic to $\GG_m^r\times\GG_a^{n-r}$.

\begin{example}
There are precisely four 3-dimensional commutative algebras, namely
$$
\KK\oplus\KK\oplus\KK,\quad \KK\oplus\KK[T_1]/(T_1^2),\quad \KK[T_1,T_2]/(T_1^2,T_1T_2,T_2^2),\quad \KK[T_1]/(T_1^3).
$$
They give rise to the bilinear multiplications
$$
(x_1y_1,x_2y_2, x_3y_3), \quad (x_1y_1,x_2y_2, x_2y_3+x_3y_2), \quad (x_1y_1, x_1y_2+x_2y_1, x_1y_3+x_3y_1),
$$
$$
(x_1y_1, x_1y_2+x_2y_1, x_1y_3+x_2y_2+x_3y_1).
$$
In notation of Theorem~\ref{A3} these are the monoids
$$
3M,\quad M + M \underset{0, 1}{+} A,\quad M \underset{1}{+} A \underset{1}{+} A,\quad
M \underset{1}{+} A \underset{1,1}{+} A.
$$
\end{example}

By construction, the orbits of the group $G(A)$ on the monoid $(A,*)$ correspond to principal ideals of the algebra $A$. In particular, already for the monoid $M \underset{1}{+} A \underset{1}{+} A$ on $\AA^3$ the number of $G(A)$-orbits on $A$ is infinite. More precisely, for a local algebra $A$ the number of $G(A)$-orbits on $A$ is finite if and only if $A$ is $\KK[T_1]/(T_1^n)$, cf.~\cite[Proposition~3.7]{HT}.

A classification of local algebras is known up to dimension 6; see \cite[Section~2]{Ma} or \cite[Section~3]{HT}. The table below represents the numbers of isomorphy classes of local algebras of small dimensions:

\[\begin{array}{c|c|c|c|c|c|c|c}
  \dim A & 1 & 2 & 3 & 4 & 5 & 6 & \ge7 \\\hline
  \#A & 1 & 1 & 2 & 4 & 9 & 25 & \infty
\end{array}\]

Starting from dimension~7, such algebras are parameterized by moduli spaces of positive dimension \cite[Example~3.6]{HT}. Applying these results and Lemma~\ref{l16l} we come to the following observation.

\begin{proposition} \label{pinfp}
There are pairwise non-isomorphic commutative monoid structures on the affine space $\AA^n$ with $n\ge 7$ parameterized by moduli spaces of positive dimension.
\end{proposition}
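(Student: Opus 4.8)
The plan is to transfer the known positive-dimensional moduli of local algebras into a family of pairwise non-isomorphic monoid structures, using the dictionary between finite-dimensional commutative algebras and bilinear monoids developed earlier in this section.

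First I would recall that to every $n$-dimensional commutative associative algebra $A$ with unit one associates the bilinear monoid $(A,*)$, whose underlying variety is $\AA^n$; this is a commutative algebraic monoid structure on $\AA^n$. The essential input is then Lemma~\ref{l16l}: if the multiplicative monoids $(A_1,*)$ and $(A_2,*)$ are isomorphic, then $A_1\cong A_2$ as algebras. Taking the contrapositive, pairwise non-isomorphic algebras of dimension $n$ yield pairwise non-isomorphic commutative monoid structures on $\AA^n$.

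Next I would feed in the cited classification. For $n=7$ there is, by \cite[Example~3.6]{HT}, a positive-dimensional moduli space of pairwise non-isomorphic local algebras of dimension $7$. Each member $A$ has $\dim_\KK A = 7$, hence produces a monoid on $\AA^7$, and by the previous paragraph distinct points of the moduli give non-isomorphic monoids. To reach an arbitrary $n\ge 7$ I would replace $A$ by the direct sum $A\oplus\KK^{n-7}$, which is an $n$-dimensional commutative algebra with unit. By the uniqueness (up to reordering) of the decomposition of a commutative algebra into local summands, an isomorphism $A\oplus\KK^{n-7}\cong A'\oplus\KK^{n-7}$ forces $A\cong A'$, since the only $7$-dimensional local summand on each side is $A$, respectively $A'$. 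Hence the family $\{A\oplus\KK^{n-7}\}$ is again parameterized by the same positive-dimensional moduli, and the associated monoids on $\AA^n$ are pairwise non-isomorphic.

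Since this argument is essentially a repackaging of results already established---the algebra-to-monoid correspondence, Lemma~\ref{l16l}, and the cited existence of positive-dimensional moduli of local algebras---I do not expect a genuine obstacle. The only point demanding care is the non-isomorphism step for $n>7$, where one must invoke uniqueness of the local decomposition to be sure that adjoining the split summand $\KK^{n-7}$ does not collapse distinct moduli points; this is exactly guaranteed by the uniqueness statement recalled above.
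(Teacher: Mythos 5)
Your proof is correct and follows essentially the same route as the paper: the algebra-to-monoid dictionary, Lemma~\ref{l16l} to separate monoids via algebras, and the Hassett--Tschinkel moduli of local algebras. The only (harmless) deviation is that for $n>7$ you pad with $\KK^{n-7}$ and invoke uniqueness of the local decomposition, whereas the paper simply cites that local algebras of every dimension $n\ge 7$ already form positive-dimensional families, so its monoids all have rank~1 while yours have rank $n-6$ for $n>7$; both arguments prove the stated proposition.
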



\section{Cox rings, toric varieties, and additive actions}
\label{sec6}

In this section we establish a connection between commutative algebraic monoids on affine spaces and actions with an open orbit of commutative unipotent groups on toric varieties.

Let $X$ be an irreducible algebraic variety over the ground field $\KK$. An additive action on $X$ is a regular faithful action $\GG_a^s\times X\to X$ with an open orbit. Let us recall that a variety $X$ is toric if $X$ is normal and there exists an action of an algebraic torus on $X$ with an open orbit. Additive actions on toric varieties are studied in~\cite{AR}.

If a variety $X$ admits an additive action then every regular invertible function on $X$ is constant and the divisor class group $\Cl(X)$ is a free finitely generated abelian group. For a toric variety $X$ the latter conditions imply that $X$ can be realized as a good quotient $\pi\colon U\stackrel{/\!/T}\longrightarrow X$ of an open subset $U\subseteq\AA^n$ whose complement is a collection of coordinate subspaces of codimensions at least 2 by a linear action of a torus $T$. Such a realization can be chosen in a canonical way. Namely, the Cox ring
$$
R(X)=\bigoplus_{[D]\in\Cl(X)} H^0(X,D)
$$
of a toric variety $X$ is a polynomial ring graded by the group $\Cl(X)$. The grading defines a linear action of the characteristic torus $T:=\Spec(\KK[\Cl(X)])$ on the total coordinate space $\AA^n:=\Spec(R(X))$. A canonically defined open subset $U\subseteq\AA^n$ whose complement is a union of some coordinate subspaces of codimensions at least 2, gives rise to the so-called characteristic space  $p\colon U\stackrel{/\!/T}\longrightarrow X$; we refer to~\cite{Cox} and \cite[Chapter~II]{ADHL} for details.

An additive action $\GG_a^s\times X\to X$ can be lifted to an action $\GG_a^s\times\AA^n\to\AA^n$ on the total coordinate space commuting with the $T$-action~\cite[Theorem~4.2.3.2]{ADHL}. This defines a faithful action $G\times\AA^n\to\AA^n$ of the commutative group $G:=T\times\GG_a^s$ with an open orbit. Let us say that the action $G\times\AA^n\to\AA^n$ is associated with the given additive action on a toric variety $X$.

Further, faithful actions $G\times\AA^n\to\AA^n$ with an open orbit are in bijection with commutative monoids on $\AA^n$ having $G$ as the group of invertible elements. So we obtain a commutative monoid on $\AA^n$ associated with the given additive action on a toric variety $X$.

\begin{example}
Consider the action $\GG_a^n\times\AA^n\to\AA^n$ by translations. This is an additive action on a toric variety $X = \AA^n$, the Cox ring $R(X)$ coincides with $\KK[X]$, the torus $T$ is trivial, and the variety $\Spec(R(X))$ coincides with $\AA^n$. So we come to the monoid of rank~0, see Proposition~\ref{corank01n},1).
\end{example}

Let us say that a toric variety $X$ is 2-complete if $X$ can not be realized as an open toric subset of a toric variety $X'$ such that $\codim_{X'} X'\setminus X\ge 2$. It is easy to see that every toric variety $X''$ can be realized as an open subset of a 2-complete toric variety $X$ with $\codim_{X} X\setminus X''\ge 2$, the varieties $X''$ and $X$ share the same (graded) Cox ring and every additive action on $X''$ can be extended to $X$. So from now on we consider commutative monoids associated only with additive actions on 2-complete toric varieties.

\begin{proposition} \label{assact}
Commutative bilinear monoids without direct factors $(\AA^1,*)$ are precisely the monoids associated with additive actions on products of projective spaces.
\end{proposition}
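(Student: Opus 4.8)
The plan is to move back and forth between commutative bilinear monoids and finite-dimensional commutative associative algebras with unit. By Lemma~\ref{l16l} such a monoid is $(A,*)$ for an essentially unique algebra $A$, and by~\cite[Theorem~8.7]{AM} we write $A=A_1\oplus\dots\oplus A_k$ with each $A_i$ local of dimension $n_i+1$ and maximal ideal $\mm_i$. Every bilinear monoid has zero, so a direct factor $(\AA^1,*)$ is precisely a one-dimensional summand $A_i\cong\KK$; thus the hypothesis "no factor $(\AA^1,*)$'' means $n_i\ge 1$ for all $i$. On the toric side I would use the Hassett--Tschinkel correspondence~\cite{HT}: a local algebra of dimension $m+1$ yields the additive action of $\GG_a^m\cong 1+\mm$ on $\PP^m=\PP(A)$ by multiplication. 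In the Cox picture $\Spec R(\PP^m)=\AA^{m+1}=A$, the characteristic torus $T=\GG_m$ acts by scalars and the lift of the additive action is this linear multiplication; note that linearity is automatic, since a $T$-normalized $\GG_a^m$-action corresponds to degree-zero locally nilpotent derivations of $\KK[x_0,\dots,x_m]$ with the standard grading, and such a derivation sends each variable to a linear form.

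I would then record the product computation. For $X=\prod_{i=1}^k\PP^{n_i}$ one has $\Cl(X)=\ZZ^k$ and the Cox ring is the polynomial ring $R(X)=\bigotimes_i\KK[x_0^{(i)},\dots,x_{n_i}^{(i)}]$, so $\Spec R(X)=\prod_i\AA^{n_i+1}$ and $T=\GG_m^k$ acts by independent scalars on the blocks. If the additive action on $X$ is the product of the Hassett--Tschinkel actions of local algebras $A_1,\dots,A_k$, its lift is the product of the block multiplications, and together with $T$ this is exactly $G(A)=\prod_iA_i^\times=\GG_m^k\times\GG_a^{\sum n_i}$ acting on $A=\bigoplus_iA_i$ by multiplication. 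Hence the associated monoid is $(\bigoplus_iA_i,*)$. This already yields one inclusion: for a bilinear monoid $(A,*)$ without an $(\AA^1,*)$-factor each summand satisfies $n_i\ge 1$, produces an additive action on $\PP^{n_i}$, and the product action on $\prod_i\PP^{n_i}$ recovers $(A,*)$.

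For the reverse inclusion the crucial point -- and the step I expect to be the main obstacle -- is that \emph{every} additive action on a product of projective spaces is a product of additive actions on the factors; I would isolate this as a lemma. Let $\GG_a^s\times X\to X$ be an additive action on $X=\prod_i\PP^{n_i}$. A commutative unipotent group acting with an open orbit has trivial generic stabilizer, since the stabilizer of a point of the orbit fixes the whole orbit by commutativity and hence acts trivially; applied to $X$ this gives $s=\dim X=\sum_i n_i$. As $\GG_a^s$ is connected it lands in $\Aut^\circ(X)=\prod_i\PGL_{n_i+1}$; let $\phi_i\colon\GG_a^s\to\PGL_{n_i+1}$ be the projections. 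Projecting the open orbit, each $\phi_i(\GG_a^s)$ acts on $\PP^{n_i}$ with an open orbit, so by the same stabilizer fact $\dim\phi_i(\GG_a^s)=n_i$ and $\phi_i(\GG_a^s)$ is an additive action on $\PP^{n_i}$. The homomorphism $(\phi_1,\dots,\phi_k)\colon\GG_a^s\to\prod_i\phi_i(\GG_a^s)$ is injective by faithfulness, and both groups are connected of dimension $\sum_i n_i=s$, so it is an isomorphism; therefore the action is the product of the $\phi_i(\GG_a^s)$-actions.

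Finally I would assemble the argument. Each factor action is, by Hassett--Tschinkel, attached to a local algebra $A_i$ of dimension $n_i+1\ge 2$, so the product computation identifies the associated monoid as $(\bigoplus_iA_i,*)$, a bilinear monoid with no $(\AA^1,*)$-factor; together with the first inclusion this proves the proposition. The one loose end, that the lift entering the associated monoid is genuinely forced, is harmless: a lift commuting with $T$ is unique because $\Hom(\GG_a^s,T)$ is trivial, and it is linear by the degree-zero derivation remark, hence equals the multiplication lift used above.
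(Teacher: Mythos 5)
Your proof is correct, and the forward inclusion is essentially the paper's: both decompose $A=A_1\oplus\dots\oplus A_k$ into local summands, observe that excluding $(\AA^1,*)$-factors means $\dim A_i\ge 2$, and identify the quotient data as $X=\prod_i\PP(A_i)$. But for the converse you take a genuinely different route. The paper never splits the additive action: it lifts the $\GG_a^s$-action on $X=\PP(V_1)\times\dots\times\PP(V_r)$ to the total coordinate space via \cite[Theorem~4.2.3.2]{ADHL} and argues linearity of the lifted $G$-action directly, by $\GG_a^s$-linearizing the generating line bundles $L_i$ using \cite[Section~2.4]{KKLV}, so that $\GG_a^s$ acts linearly on each $H^0(X,L_i)=V_i^*$ and hence on $\AA^n=V_1\oplus\dots\oplus V_r$; bilinearity then follows as in Proposition~\ref{propmul}, with no appeal to the Hassett--Tschinkel correspondence. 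You instead prove a splitting lemma --- every additive action on $\prod_i\PP^{n_i}$ lands in $\Aut^\circ=\prod_i\PGL_{n_i+1}$ and, by the trivial-stabilizer and dimension count, coincides with the product $\prod_i\phi_i(\GG_a^s)$ of additive actions on the factors --- and then apply \cite{HT} blockwise to reassemble the monoid as $(\bigoplus_i A_i,*)$. Your lemma and its proof are sound (the projection of the open orbit is an open orbit, quotients of vector groups are vector groups in characteristic zero, and an injective homomorphism between connected groups of equal dimension is an isomorphism), and your treatment of the two points the lifting approach makes automatic is also correct: uniqueness of the $T$-commuting lift because $\Hom(\GG_a^s,T)$ is trivial, and linearity because a lift commuting with $T$ corresponds to locally nilpotent derivations of degree zero (Lemma~\ref{grLND}), which for the $\ZZ^k$-grading with degrees $e_1,\dots,e_k$ must send each variable into the linear span of its block --- in effect an elementary substitute for the \cite{KKLV} linearization in this special case. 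What each approach buys: the paper's is shorter and self-contained within the Cox-ring formalism; yours is longer but proves strictly more, namely that additive actions on products of projective spaces are exactly products of additive actions on the factors, hence are classified by tuples of local algebras $(A_1,\dots,A_k)$ with $\dim A_i=n_i+1$ --- a weighted-free instance of exactly the kind of correspondence the paper asks about for weighted projective spaces. Both you and the paper leave implicit the same small point, that a monoid-theoretic direct factor $(\AA^1,*)$ forces a one-dimensional local summand, so this is not a gap relative to the paper.
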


\begin{proof}
Let $S$ be a commutative bilinear monoid on $\AA^n$. Then $\AA^n$ has a structure of a commutative algebra, and there is a decomposition $\AA^n=A_1\oplus\ldots\oplus A_r$ into a direct sum of local algebras. Each element $(t_1,\ldots,t_r)$ of the maximal torus $T$ in $G(S)$ acts on every subspace $A_i$ via scalar multiplication by $t_i$. Let $d_i:=\dim A_i$. Then the torus $T$ acts on $\AA^n$ linearly with characters $e_1$ ($d_1$ times),\ldots, $e_r$ ($d_r$ times), where $e_1,\ldots,e_r$ form a basis of the lattice of characters of the torus $T$. The condition that $S$ contains no direct factors $(\AA^1,*)$ means that all $d_i\ge 2$.
It is easy to show (see~\cite[Exercise~2.13]{ADHL}) that there is a unique maximal open subset $U\subseteq\AA^n$ such that there exists a good quotient $\pi\colon U\stackrel{/\!/T}\longrightarrow X$ which is the characteristic space of $X$; namely, $U=(A_1\setminus\{0\})\times\ldots\times(A_r\setminus\{0\})$ and $X=\PP(A_1)\times\ldots\times\PP(A_r)$.

Conversely, consider an additive action $\GG_a^s\times X\to X$ with $X=\PP(V_1)\times\ldots\times\PP(V_r)$, where $V_i$ are some vector spaces of dimension at least two. The Picard group of $X$ is freely generated by the line bundles $L_1,\ldots,L_r$ corresponding to ample generators of the Picard groups of the factors $\PP(V_1),\ldots,\PP(V_r)$. The space of global sections of $L_i$ is identified with the dual space $V_i^*$. By~\cite[Section~2.4]{KKLV}, every line bundle $L_i$ admits a $\GG_a^s$-linearization. Thus the lifted action of the group $G=T\times\GG_a^s$ to the total coordinate space $\AA^n=V_1\oplus\ldots\oplus V_r$ of $X$ is linear and faithful, and we obtain the desired bilinear monoid structure on $\AA^n$.
\end{proof}

The results of Section~\ref{sec4} allow to classify additive actions on a weighted projective plane $\PP(a,b,c)$. Without loss of generality we may assume that $0<a\le b\le c$ and $a,b,c$ are pairwise coprime. By~\cite[Proposition~2]{AR}, an additive action on $\PP(a,b,c)$ exists if and only if $a=1$. The Cox ring $R(X)$ of $X=\PP(a,b,c)$ is the polynomial ring $\KK[x_1,x_2,x_3]$ with a $\ZZ$-grading given by $\deg(x_1)=a$, $\deg(x_2)=b$, $\deg(x_3)=c$. An additive action on $X$ corresponds to a pair of commuting homogeneous locally nilpotent derivations on $\KK[x_1,x_2,x_3]$ of degree zero. Thus Proposition~\ref{prop2der} and formulas~(\ref{A3act_1}),~(\ref{A3act_2}) imply the following result.

\begin{proposition} \label{WPP}
Let $\PP(1,b,c)$ be a weighted projective plane with $1\le b\le c$, $(b,c)=1$. There are exactly two non-isomorphic additive actions $\GG_a^2\times\PP(1,b,c)\to \PP(1,b,c)$ given by
$$
(\alpha_1,\alpha_2)\circ (x_1,x_2,x_3)=(x_1, x_2+\alpha_1 x_1^b,x_3 + \alpha_2 x_1^c)
$$
and
$$
(\alpha_1,\alpha_2)\circ (x_1,x_2,x_3)=\Bigl(x_1, x_2+\alpha_1 x_1^b,x_3 + \alpha_2 x_1^c +\sum_{k = 1}^{d+1} \binom{d+1}{k} \alpha_1^k x_1^{e + b(k - 1)} x_2^{d-k+1}\Bigr),
$$
where $c = b d + e, \; d, e \in \ZZ, \; 0 \le e < b$.
\end{proposition}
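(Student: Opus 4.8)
\textbf{Proof proposal for Proposition~\ref{WPP}.}

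The plan is to reduce the classification of additive actions on $\PP(1,b,c)$ to the classification of pairs of commuting homogeneous locally nilpotent derivations of degree zero obtained in Proposition~\ref{prop2der}, and then to verify that the two resulting derivation types give precisely the two stated actions. First I would recall the setup established just before the statement: the Cox ring of $X=\PP(1,b,c)$ is the polynomial ring $\KK[x_1,x_2,x_3]$ with the $\ZZ$-grading $\deg(x_1)=1$, $\deg(x_2)=b$, $\deg(x_3)=c$, and the characteristic space $p\colon U\stackrel{/\!/T}\longrightarrow X$ is the quotient by the one-dimensional characteristic torus $T$. An additive action $\GG_a^2\times X\to X$ lifts to an action of $\GG_a^2$ on the total coordinate space commuting with $T$, which by the correspondence in Lemma~\ref{grLND} is exactly a pair of commuting homogeneous locally nilpotent derivations $\delta_1,\delta_2$ on $\KK[x_1,x_2,x_3]$ of degree zero. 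Faithfulness of the action on $X$ together with the open-orbit condition translates into the hypothesis $\dim\bigl(\Ker\delta_1\cap\Ker\delta_2\cap\langle x_1,x_2,x_3\rangle\bigr)\le 1$ needed to invoke Proposition~\ref{prop2der}; I would check this translation carefully, since a lift that is non-effective on $U$ but effective on $X$ must be ruled out.

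Next I would apply Proposition~\ref{prop2der}. Since $a=1$ is forced (consistent with the existence criterion of~\cite[Proposition~2]{AR}), the proposition yields that after a homogeneous change of variables the pair $(\delta_1,\delta_2)$ is of Type~1 or Type~2, with
$$
\delta_i\colon x_1\mapsto 0,\quad x_2\mapsto\beta_i x_1^b,\quad x_3\mapsto\gamma_i x_1^c+\alpha\beta_i x_1^e x_2^d,
$$
where $\alpha=0$ in Type~2 and $\alpha$ is a normalizable nonzero scalar in Type~1. As in the proof of Theorem~\ref{A3}, the open-orbit condition forces the tuples $(\beta_1,\beta_2)$ and $(\gamma_1,\gamma_2)$ to be non-proportional, so after a linear change of the pair $\delta_1,\delta_2$ one may assume $\delta_1\colon x_2\mapsto x_1^b$, $x_3\mapsto\alpha x_1^e x_2^d$ and $\delta_2\colon x_3\mapsto x_1^c$. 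Exponentiating $\alpha_1\delta_1+\alpha_2\delta_2$ then gives the $\GG_a^2$-action on the total coordinate space exactly as in formulas~\eqref{A3act_1} and~\eqref{A3act_2}, with the normalization $\alpha=d+1$ absorbed by rescaling $x_3$; pushing this action down through $p$ to $X$ produces the two displayed formulas in the statement.

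Finally I would prove that the two actions are non-isomorphic, and that no further identifications occur. Here I expect the main obstacle to lie: an isomorphism of additive actions on $X$ need not lift to a homogeneous change of variables on the Cox ring respecting the given form, so the distinction argument must be made intrinsically. The cleanest route is to reuse the stabilizer computation from the proof of Theorem~\ref{A3}: acting on the homogeneous component $\KK[x_1,x_2,x_3]_c=\langle x_3\rangle\oplus\langle x_1^{c-bl}x_2^l\mid 0\le l\le d\rangle$, the Type~1 action (formula~\eqref{A3act_1}) admits a unique one-parameter subgroup of $\GG_a^2$ stabilizing a homogeneous element, whereas the Type~2 action (formula~\eqref{A3act_2}) admits infinitely many, via the elements $x_3+\lambda_1 x_1^{c-b}x_2$ with stabilizers $\{\alpha_2+\lambda_1\alpha_1=0\}$. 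Since this invariant of the induced $\GG_a^2$-representation on $\KK[x_1,x_2,x_3]_c$ is preserved under any equivariant isomorphism, the two actions are genuinely distinct. It remains to observe that within each type the normalizations leave no free parameters, so exactly two additive actions arise, completing the proof.
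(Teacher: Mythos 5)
Your proposal is correct and follows essentially the same route as the paper: the paper's (very terse) argument likewise lifts the additive action to the Cox ring $\KK[x_1,x_2,x_3]$ with $\deg(x_1)=1$, $\deg(x_2)=b$, $\deg(x_3)=c$, applies Proposition~\ref{prop2der} to the resulting pair of commuting homogeneous locally nilpotent derivations of degree zero, and then obtains and distinguishes the two actions via formulas~\eqref{A3act_1} and~\eqref{A3act_2} together with the stabilizer computation on $\KK[x_1,x_2,x_3]_c$ from the proof of Theorem~\ref{A3}. The points you flag as needing care (faithfulness of the lift, translation of the open-orbit condition into the kernel condition, and the intrinsic nature of the stabilizer invariant) are left implicit in the paper but are filled in correctly along exactly the lines you describe.
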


The last example of this section provides some commutative monoids on $\AA^4$.

\begin{example}\label{exref}
Let $X$ be the Hirzebruch surface $\FF_d$, $d\in\ZZ_{\ge 0}$. This is a toric variety given by a complete fan with rays generated by the vectors $(0, 1), (1, 0), (0, -1), (-1, d)$. The Cox ring of $\FF_d$ is the polynomial ring $\KK[x_1, x_2, x_3, x_4]$ with a $\ZZ^2$-grading given by
$$
\deg x_1 = (1, 0), \quad \deg x_2 = (0, 1), \quad \deg x_3 = (1, 0), \quad \deg x_4 = (d, 1).
$$
In this case the open subset $U\subseteq\AA^4$ in the quotient presentation of $\FF_d$ is $$
\AA^4 \setminus (\{x_1=x_3=0\} \cup \{x_2=x_4=0\}).
$$
By~\cite[Proposition~5.5]{HT}, the variety $\FF_d$ with $d>0$ admits two additive actions $\GG_a^2\times\FF_d\to\FF_d$, one is normalized by the 2-torus acting on $\FF_d$, and the another one is not. For $d=0$, we have only a normalized additive action on $\FF_0=\PP^1\times\PP^1$.

Explicitly, the lifting of the normalized action to $\AA^4$ is given by
$$
(t_1, t_2, \alpha_1, \alpha_2) \circ (x_1, x_2, x_3, x_4) = \bigl(t_1x_1, \; t_2x_2, \; t_1(x_3 + \alpha_1x_1), \; t_1^dt_2(x_4+\alpha_2x_1^dx_2)\bigr),
$$
while the non-normalized action lifts as
\begin{multline*}
(t_1, t_2, \alpha_1, \alpha_2) \circ (x_1, x_2, x_3, x_4) =\\= \Bigl(t_1x_1, \; t_2x_2, \; t_1(x_3 + \alpha_1x_1), \; t_1^dt_2\Bigl(x_4+\Bigl(\alpha_2 + \frac{\alpha_1^2}{2}\Bigr)x_1^dx_2 + \alpha_1x_1^{d-1}x_2x_3\Bigr)\Bigr),
\end{multline*}
cf.~\cite[Example~6.4]{AR}. The corresponding commutative monoids of rank~2 on $\AA^4$ are
$$
x*y = (x_1y_1, \; x_2y_2, \; x_1y_3 + y_1x_3, \; x_1^dx_2y_4 + y_1^dy_2x_4)
$$
and
$$
x*y = (x_1y_1, \; x_2y_2, \; x_1y_3 + y_1x_3, \; x_1^dx_2y_4 + y_1^dy_2x_4 + x_1^{d-1}y_1^{d-1}x_2y_2x_3y_3).
$$
\end{example}

\section{Idempotent elements in commutative monoids}
\label{sec7}

In this section we illustrate the well-known structure theory of affine algebraic monoids in the case of commutative monoids. This case is much simpler, and for convenience of the reader we provide short proofs whenever it is possible.

An element $e\in S$ of an algebraic monoid $S$ is an idempotent if $e^2=e$. We denote by $E(S)$ the set of all idempotents of $S$. Let us take a closer look at the set $E(S)$ for a commutative monoid $S$.

\begin{lemma}
Every $G(S)$-orbit on $S$ contains at most one idempotent.
\end{lemma}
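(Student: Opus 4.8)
The plan is to show that if $e$ and $f$ are two idempotents lying in the same $G(S)$-orbit, then $e = f$. Suppose $f = ge$ for some $g \in G(S)$. The key observation is that in a commutative monoid, multiplying an idempotent by any fixed element behaves very rigidly. I would first compute $ef$ and use commutativity together with the idempotent relations to pin down a relation between $e$, $f$, and $g$.

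First I would note that $f = ge$ implies $ef = e(ge) = g e^2 = ge = f$, using commutativity and $e^2 = e$. Symmetrically, since $e = g^{-1}f$, the same computation gives $ef = f(g^{-1}f)\cdot g \cdots$, so let me instead argue directly: from $f = ge$ we also get $e = g^{-1}f$, hence $ef = (g^{-1}f)f = g^{-1}f^2 = g^{-1}f = e$. Thus I obtain both $ef = f$ and $ef = e$, and therefore $e = f$. This is the whole content of the argument, and it is short precisely because commutativity lets the invertible element $g$ slide past the idempotents harmlessly.

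The only point requiring a little care is the justification that $e = g^{-1}f$ follows from $f = ge$, which is immediate since $g \in G(S)$ is invertible. I do not anticipate a genuine obstacle here: the statement is a soft algebraic fact that holds for any commutative monoid (not even requiring the algebraic structure), and the product $ef$ serves as the common value forcing the two idempotents to coincide. The strategy is simply to exhibit $ef$ as equal to each of $e$ and $f$ separately.

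In summary, the proof reduces to the two symmetric computations $ef = e(ge) = ge = f$ and $ef = (g^{-1}f)f = g^{-1}f = e$, from which $e = f$ follows immediately. I would present these two lines and conclude, with the remark that commutativity is exactly what makes both identities available.
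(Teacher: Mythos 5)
Your proof is correct and is essentially the paper's argument in a slightly repackaged form: the paper computes $ge=(ge)^2=g^2e^2=g^2e$ and cancels $g$, while you exhibit the common value $ef$ with $ef=e(ge)=ge^2=ge=f$ and $ef=(g^{-1}f)f=g^{-1}f=e$; both use exactly the same ingredients (commutativity, idempotency of both elements, invertibility of $g$). No gaps.
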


\begin{proof}
If $e,ge \in E(S)$ with $g\in G(S)$, then $ge=(ge)^2=g^2e^2=g^2e$ and $e=ge$.
\end{proof}

\begin{proposition} \label{p3cp}
Let $S$ be a commutative algebraic monoid with multiplication $\mu\colon S\times S\to S$ and
$\mathcal{O}$ be a $G(S)$-orbit on $S$. The following conditions are equivalent.
\begin{enumerate}
\item
The orbit $\mathcal{O}$ contains an idempotent.
\item
The intersection $\mu(\mathcal{O}\times\mathcal{O})\cap\mathcal{O}$ is non-empty.
\item
We have $\mu(\mathcal{O}\times\mathcal{O})=\mathcal{O}$ and the restriction
$$
\mu|_{\mathcal{O}\times\mathcal{O}}\colon\mathcal{O}\times\mathcal{O}\to\mathcal{O}
$$
defines a group structure on $\mathcal{O}$.
\end{enumerate}
\end{proposition}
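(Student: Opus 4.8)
The plan is to establish the three equivalences through the cyclic chain $(1)\Rightarrow(3)\Rightarrow(2)\Rightarrow(1)$. Throughout, I would use that $G(S)$ acts on $S$ by multiplication, so that the orbit has the form $\mathcal{O}=G(S)a$ for any chosen base point $a\in\mathcal{O}$. Commutativity of $S$ enters crucially through the identity $(ga)(ha)=gh\,a^2$ for $g,h\in G(S)$, which lets me compute products of orbit elements inside a single orbit.

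For $(1)\Rightarrow(3)$, I would fix an idempotent $e\in\mathcal{O}$ and write $\mathcal{O}=G(S)e$. The relation $e^2=e$ immediately gives $(ge)(he)=gh\,e\in\mathcal{O}$, so $\mu(\mathcal{O}\times\mathcal{O})\subseteq\mathcal{O}$; conversely any $ke\in\mathcal{O}$ equals $(ke)(e)$, whence $\mu(\mathcal{O}\times\mathcal{O})=\mathcal{O}$. To obtain the group structure I would consider the stabilizer $H=\{g\in G(S):ge=e\}$, which is an (automatically normal) subgroup of the commutative group $G(S)$, together with the orbit map $g\mapsto ge$ inducing a bijection $\Phi\colon G(S)/H\to\mathcal{O}$. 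The key computation $\Phi(ghH)=gh\,e=(ge)(he)$ shows that $\Phi$ carries the quotient-group multiplication on $G(S)/H$ exactly to $\mu|_{\mathcal{O}\times\mathcal{O}}$; hence $(\mathcal{O},\mu)$ is a group with identity $e$ and inverse $(ge)^{-1}=g^{-1}e$.

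The implication $(3)\Rightarrow(2)$ is immediate, since $\mu(\mathcal{O}\times\mathcal{O})=\mathcal{O}$ forces the intersection with $\mathcal{O}$ to equal $\mathcal{O}$. For $(2)\Rightarrow(1)$, I would take $x=ga$ and $y=ha$ in $\mathcal{O}$ with $xy\in\mathcal{O}$; then $gh\,a^2=ka$ for some $k\in G(S)$, so $a^2=m\,a$ with $m:=(gh)^{-1}k\in G(S)$. Setting $e:=m^{-1}a\in\mathcal{O}$ yields $e^2=m^{-2}a^2=m^{-2}(ma)=m^{-1}a=e$, producing the desired idempotent in $\mathcal{O}$.

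I expect the only genuinely delicate point to be the group-structure claim in $(1)\Rightarrow(3)$: one must verify that the abstract quotient operation on $G(S)/H$ is literally realized by the restriction of the morphism $\mu$, and that the resulting object is an \emph{algebraic} group. The former is settled by the identity $\Phi(ghH)=(ge)(he)$ above, while the latter follows because $\mu|_{\mathcal{O}\times\mathcal{O}}$ is the restriction of a morphism and $\mathcal{O}$, being a homogeneous space of $G(S)$, is a smooth locally closed subvariety; thus $\Phi$ is an isomorphism of algebraic groups. Everything else reduces to the elementary manipulations displayed above.
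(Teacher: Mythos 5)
Your proposal is correct and takes essentially the same approach as the paper: the same commutativity identity $(ga)(ha)=gh\,a^2$, the same extraction of an idempotent $e=m^{-1}a$ from a relation $a^2=ma$ with $m\in G(S)$, and the same identification of $\mathcal{O}$ with the factor group $G(S)/H$ via the stabilizer of $e$, merely arranged in the cycle $(1)\Rightarrow(3)\Rightarrow(2)\Rightarrow(1)$ rather than the paper's $(1)\Rightarrow(2)\Rightarrow(3)\Rightarrow(1)$. Your closing remark verifying that the group structure is algebraic is a harmless refinement that the paper leaves implicit.
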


\begin{proof}
$(1)\Rightarrow (2)$ If $e\in\mathcal{O}$ is an idempotent then $\mu(e,e)=e\in\mathcal{O}$.

\smallskip

$(2)\Rightarrow (3)$ If $a,b\in\mathcal{O}$ and $ab\in\mathcal{O}$ then
$$
(ga)(g'b)=(gg')(ab)\in\mathcal{O}
$$
for any $g,g'\in G(S)$. This implies $\mu(\mathcal{O}\times\mathcal{O})=\mathcal{O}$.

Now suppose that $a^2=ga$ for some $g\in G(S)$. Let $e=g^{-1}a$. Then $e^2=e$ and for any elements $ge$ and $g'e$ in $\mathcal{O}$ we have $geg'e=gg'e^2=gg'e$. This shows that $\mathcal{O}$ is identified with the factor group $G(S)/H$, where $H$ is the stabilizer of $e$ in $G(S)$.

\smallskip

$(3)\Rightarrow (1)$ The unit element of the group $\mathcal{O}$ is the desired idempotent.
\end{proof}

Let us introduce some classes of elements in commutative monoids. We say that
an element $a\in S$ is group-like if the orbit $G(S)a$ contains an idempotent.
For a monoid $S$ with zero we call an element $a\in S$ a zero-divisor
if there is a nonzero $b\in S$ such that $ab=0$. Finally, an element $a\in S$
is nilpotent if there is a positive integer $m$ such that $a^m=0$.
These properties are constant along $G(S)$-orbits. Moreover, a nilpotent is a group-like element if and only if it is zero.

\begin{example}
Let $S$ be a monoid of corank~1 as in Proposition~\ref{corank01n}. Assume that all non-negative integers $b_i$ defining this monoid are positive. This means that $S$ does not have the monoid $(\AA^1,*)$ as a direct factor. Then non-invertible group-like elements in $S$
are precisely the elements $(x_1,\ldots,x_{n-1},x_n)$ with $x_n=0$ and at least one more zero coordinate. The set $E(S)$ consists of the elements $(\epsilon_1,\ldots,\epsilon_{n-1},0)$ with $\epsilon_i=0,1$, and the elements $(0,\ldots,0,x_n)$ are nilpotent.
\end{example}

\begin{lemma}
Let $S$ be an algebraic monoid with zero. Then every element in $S$ is either invertible or a zero-divisor.
\end{lemma}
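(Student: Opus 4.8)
The plan is to use the orbit structure of commutative algebraic monoids as captured by Proposition~\ref{p3cp}. The key observation is that an element $a\in S$ is invertible precisely when its $G(S)$-orbit $\mathcal{O}=G(S)a$ is the open orbit $G(S)$ itself, and more generally that group-like elements are exactly those lying in orbits which carry a group structure. So I would first translate the dichotomy ``invertible or zero-divisor'' into a statement about $G(S)$-orbits, reducing to understanding which orbits contain zero-divisors.

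First I would take an element $a\in S$ that is \emph{not} a zero-divisor and aim to show it is invertible. Let $\mathcal{O}=G(S)a$. Consider the multiplication map by $a$, namely $\lambda_a\colon S\to S$, $b\mapsto ab$. Since $a$ is not a zero-divisor, $\lambda_a(b)=0$ forces $b=0$, so the kernel of this map (the fiber over $0$) is the single point $0$. The idea is that a dominant endomorphism of an irreducible affine variety whose fiber over a point is a single point must be close to an isomorphism, and in the monoid setting this should force $a$ to be invertible. More precisely, I would argue that $\lambda_a$ is injective: if $ab=ac$ then $a(b-c)$ behaves like a zero-divisor relation, though one must be careful because $S$ need not be an additive group; instead the cleaner route is to pass to the closed orbit.

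The decisive step uses that $S$ has a unique closed $G(S)\times G(S)$-orbit, which is the kernel $K$, and since $S$ has zero, $K=\{0\}$ by the discussion following the definition of the kernel in Section~\ref{sec2}. For the element $a$, consider the closure $\overline{G(S)a}$; it is $G(S)$-invariant and contains a closed orbit, hence contains $0$. If $a$ is not invertible, then $\mathcal{O}\ne G(S)$, so $\dim\mathcal{O}<n$ and $0\in\overline{\mathcal{O}}\setminus\mathcal{O}$. I would then produce a zero-divisor: take any idempotent $e$ in the boundary of $\overline{\mathcal{O}}$ (Proposition~\ref{p3cp} and the structure theory guarantee idempotents in lower-dimensional orbit closures), and show that $ea\ne a$ forces a nontrivial annihilator. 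Concretely, the element $a-ea$ (working inside the finite-dimensional representation given by Lemma~\ref{l3l}, where $G(S)$ acts linearly) is killed by the complementary idempotent $1-e$ relative to $a$, yielding a nonzero $b$ with $ab=0$.

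The main obstacle I anticipate is the last point: manufacturing an explicit nonzero annihilator $b$ for a non-invertible $a$ without assuming the monoid is bilinear, since in general $S$ carries no linear algebra structure and multiplication is merely polynomial. The honest fix is to exploit that $0\in\overline{G(S)a}$, so there is a one-parameter subgroup $\lambda\colon\GG_m\to G(S)$ with $\lim_{t\to 0}\lambda(t)a=0$; then for $b$ a suitable limit point of $\lambda(t^{-1})\cdot(\text{something})$ one gets $ab=0$ with $b\ne 0$. Making this limit argument rigorous—choosing $b$ so that the product $ab$ degenerates to $0$ while $b$ itself stays away from $0$—is the technical heart of the proof, and I would carry it out using the $\GG_m$-weight decomposition of the linear $T$-action furnished by Lemma~\ref{l3l} together with commutativity of $\mu$.
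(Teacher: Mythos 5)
There is a genuine gap: your proof is never completed. The entire argument funnels into the final step --- producing a nonzero $b$ with $ab=0$ for a non-invertible $a$ --- and that step is left as an acknowledged ``technical heart'' rather than proved. The intermediate constructions you propose do not survive scrutiny either: the element $a-ea$ and the ``complementary idempotent $1-e$'' presuppose an ambient linear (additive) structure that a general algebraic monoid does not have; Lemma~\ref{l3l} only linearizes the action of the maximal torus $T$, not of all of $G(S)$, and only for commutative monoids on $\AA^n$, whereas the lemma is stated for an arbitrary algebraic monoid with zero. The fallback via a one-parameter subgroup $\lambda\colon\GG_m\to G(S)$ with $\lim_{t\to 0}\lambda(t)a=0$ is also unjustified: nothing guarantees such a subgroup exists (nor, if it does, that one can choose $b$ as a limit of $\lambda(t)^{-1}\cdot(\cdot)$ that converges, stays away from $0$, and annihilates $a$ --- the attempt $a\,\lambda(t)^{-1}c=\lambda(t)^{-1}ac$ is circular). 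So as written the proposal establishes nothing beyond the easy observations that $0\in\overline{G(S)a}$ and that the fiber of $\lambda_a\colon s\mapsto as$ over $0$ is relevant.

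The irony is that you had the decisive object in hand --- the fiber $\lambda_a^{-1}(0)$ --- but applied the dimension heuristic in the wrong direction. The paper's proof is one line: if $a$ is not invertible, then the image of $\lambda_a$ is contained in $S\setminus G(S)$, which is closed of dimension $<\dim S$ because $G(S)$ is open and dense in the irreducible variety $S$. So $\lambda_a$ is \emph{not} dominant, and the fiber dimension theorem \cite[Theorem~4.1]{Hum} forces every nonempty fiber to have positive dimension; in particular the fiber over $0$ (nonempty, since $a\cdot 0=0$) cannot be the single point $0$, yielding a nonzero $b$ with $ab=0$. Your heuristic ``a dominant endomorphism with a singleton fiber is close to an isomorphism'' invokes the same theorem, but the relevant map here is the non-dominant one, and smallness of the image --- not injectivity over $0$ --- is what drives the conclusion. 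No orbit structure, idempotents in orbit closures, or limit arguments are needed.
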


\begin{proof}
Assume that an element $a\in S$ is not invertible. Then the image of the morphism $S\to S$, $s\mapsto as$ is contained in $S\setminus G(S)$. Since $\dim(S\setminus G(S))<\dim S$, the preimage of the point $0$ in $S$ can not consist of one element~$0$; see~\cite[Theorem~4.1]{Hum}.
\end{proof}

The next result is a very particular case of strong $\pi$-regularity for algebraic semigroups~\cite{BR}.

\begin{proposition}\label{q1q}
Let $S$ be a commutative algebraic monoid. Then there is a positive integer $m$ such that for every $a\in S$ the element $a^m$ is group-like.
\end{proposition}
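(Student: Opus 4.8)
The plan is to reduce the statement to the structure theory already developed for commutative monoids and then handle the remaining cases directly. First I would invoke Lemma~\ref{l2l} to write $S\cong\GG_a^k\times S_0$, where $S_0$ is a commutative monoid with zero. Since being group-like is a coordinate-wise property under direct products (an element is group-like precisely when each of its components lies in an orbit containing an idempotent), and since in the factor $\GG_a^k$ every element is already invertible, hence group-like, it suffices to prove the claim for $S_0$. Thus I may assume from the outset that $S$ is a commutative monoid with zero, so that $G(S)$ acts with a unique fixed point, namely $0$, by Lemma~\ref{l1l}.

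Next I would exploit the linearizability of the torus action. By Lemma~\ref{l3l} the action of the maximal torus $T$ on $S$ is linearizable, so I may assume $T=\GG_m^r$ acts linearly and diagonally on $S=\AA^n$. The key structural fact I intend to use is that $\KK[S]$ is then $\ZZ^r$-graded, and the corank-$s$ unipotent part acts by commuting homogeneous locally nilpotent derivations of degree zero (Lemma~\ref{grLND}). The idea is that for an arbitrary $a\in S$, passing to a suitable power $a^m$ kills the ``unipotent'' obstruction to being group-like: raising to a power moves $a$ into a subvariety where the $\GG_a^s$-action degenerates, and what remains is governed by the torus action, where idempotents abound.

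The main technical step will be to produce a single uniform exponent $m$ working for all $a\in S$ simultaneously. Here I would argue as follows. Consider the closure $\overline{G(S)a}$ of the orbit of $a$; by Proposition~\ref{p3cp}, $a$ is group-like if and only if $a^2\in G(S)a$, equivalently $\mu(\mathcal{O}\times\mathcal{O})\cap\mathcal{O}\neq\varnothing$ for $\mathcal{O}=G(S)a$. The sequence of orbits of $a,a^2,a^4,\ldots$ is constrained by dimension: since $\dim\overline{G(S)a^{2^j}}$ is nonincreasing in $j$, it stabilizes, and once it stabilizes I would show the orbit itself stabilizes, giving an idempotent in that orbit by the semigroup-theoretic fact that a commutative semigroup orbit closing on itself contains an idempotent. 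The uniformity of $m$ then follows from the finiteness of the number of torus weight strata: the torus $T$ has finitely many orbit types on the linearized $\AA^n$, an element's eventual idempotent depends only on the set of vanishing coordinates, and one takes $m$ to be a common multiple handling all these finitely many strata together with a bound on the nilpotency index coming from local nilpotency of the $\delta_i$.

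The hard part will be controlling the interaction between the torus and unipotent parts uniformly across $S$: a priori the exponent needed to reach an idempotent could depend on $a$ through both its torus-weight support and the depth of nilpotency of its unipotent component. I expect the resolution to come from the fact that $S$ is covered by finitely many $T$-invariant strata determined by coordinate vanishing, and on each stratum the residual $\GG_a^s$-action has a uniformly bounded nilpotency degree because the derivations $\delta_1,\ldots,\delta_s$ are fixed locally nilpotent operators on the finitely generated algebra $\KK[x_1,\ldots,x_n]$; taking the maximum of the finitely many stratum-wise exponents yields the single $m$ valid for all $a\in S$.
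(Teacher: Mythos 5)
Your core descent step is essentially the paper's argument, but two things break as written. First, the scope: the proposition concerns an \emph{arbitrary} commutative algebraic monoid, while your opening reductions via Lemmas~\ref{l2l}, \ref{l3l} and~\ref{grLND} are available only for monoids on $\AA^n$ (their proofs use normality and the fact that invertible regular functions on an affine space are constant). So your argument, even if completed, proves a strictly weaker statement than the one asserted. Moreover, none of that machinery is needed: the paper's proof runs directly on $S$, using only that $G(S)$ is open, hence dense, in the irreducible variety $S$. This density gives $\mu(S,\mathcal{O})\subseteq\overline{\mathcal{O}}$ for $\mathcal{O}=G(S)a$, because $\mu(G(S),\mathcal{O})=\mathcal{O}$; hence $a^2\in\overline{\mathcal{O}}$, and by Proposition~\ref{p3cp} either $a$ is group-like or $a^2$ lies in an orbit of strictly smaller dimension. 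You assert the dimension monotonicity of the sequence $a,a^2,a^4,\ldots$ without this density/closure justification, and it is the one fact the entire induction rests on.

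Second, and this is the genuine gap: your uniformity argument for $m$ does not work as sketched. The claims that ``an element's eventual idempotent depends only on the set of vanishing coordinates'' and that a bound on the nilpotency index of the derivations $\delta_1,\ldots,\delta_s$ controls the exponent $m$ are never substantiated; no relation between the order of local nilpotency of the $\delta_i$ and the power needed to reach a group-like element is established, and the stratification by coordinate vanishing is not obviously compatible with taking powers in the monoid. The paper obtains uniformity in one line, with no stratification at all: by Proposition~\ref{p3cp}, an orbit containing an idempotent satisfies $\mu(\mathcal{O}\times\mathcal{O})=\mathcal{O}$ and is a group under $\mu$, hence is closed under taking powers, so every power of a group-like element is again group-like. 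Since the descent shows that $a^{2^j}$ is group-like for some $j\le\dim S$, the single exponent $m=2^{\dim S}$ works for all $a\in S$ simultaneously, because $a^{m}=\bigl(a^{2^j}\bigr)^{2^{\dim S-j}}$. You should replace your last paragraph by this observation and delete the linearization and grading apparatus entirely.
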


\begin{proof}
Let $\mathcal{O}$ be the $G(S)$-orbit of an element $a\in S$. Since $\mu(G(S),\mathcal{O})=\mathcal{O}$ and the group $G(S)$ is dense in $S$,
we see that $\mu(S,\mathcal{O})$ is contained in the closure $\overline{\mathcal{O}}$.
In particular, $\mu(\mathcal{O},\mathcal{O})$ is contained in $\overline{\mathcal{O}}$.
If the set $\mu(\mathcal{O},\mathcal{O})$ intersects $\mathcal{O}$, the element $a$ is group-like by Proposition~\ref{p3cp}. Otherwise the element $a^2$ lies in a $G(S)$-orbit of smaller dimension.

By the same arguments, either the element $a^2$ is group-like or $a^4$ lies in an orbit of even smaller dimension. Since the dimension is a non-negative integer, we find a number $m$ such that $a^m$ is a group-like element. By construction, we have $m\le 2^{\dim S}$. Since any power
of a group-like element is again group-like, we can find one value $m$ for all elements $a\in S$.
\end{proof}

\begin{example}
Let $S$ be a monoid of corank~1 as in Proposition~\ref{corank01n}. Then for every non-invertible element $a\in S$ the last coordinate of the element $a^2$ is zero, hence $a^2$ is group-like.
\end{example}

Let $T$ be the maximal torus in $G(S)$ for a commutative affine monoid $S$ and $\chi_1\ldots,\chi_l$ be the weights of some $T$-semi-invariant generators of the algebra $\KK[S]$. Consider the rational vector space $M_{\QQ}$ spanned by the characters of $T$. Let $C(S)$ be the cone in $M_{\QQ}$ generated by $\chi_1\ldots,\chi_l$. The cone $C(S)$ has dimension $r$.

Assume for a moment that $G(S)=T$. We call such monoids toric. Faces of the cone $C(S)$ are in bijection with $T$-orbits on $S$. Moreover, every $T$-orbit on $S$ contains a unique idempotent: under the closed embedding of $S$ in $\AA^l$ idempotents are precisely the points in $S$ with $(0,1)$-coordinates; see~\cite{Neeb} for details. In particular, in toric monoids all elements are group-like.

By a result of Putcha, for an affine monoid $S$ the conjugacy class of every idempotent intersects the submonoid $\overline{T}$ \cite[Theorem~6]{LLC}. If $S$ is commutative, we have $E(S)=E(\overline{T})$.

\begin{proposition} \label{us}
Let $S$ be a commutative monoid of rank $r$ on $\AA^n$. Then
\begin{enumerate}
\item
$S$ contains a finite number of idempotents and this number is at least $2^r$;
\item
the number of idempotents is $2^r$ provided $r\le 2$.
\end{enumerate}
\end{proposition}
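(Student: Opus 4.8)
The plan is to turn the count of idempotents into a count of faces of a rational polyhedral cone and then read off both assertions from elementary convexity.

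First I would pass to the case of a monoid with zero. By Lemma~\ref{l2l} we may write $S\cong\GG_a^k\times S_0$, where $S_0$ is a commutative monoid with zero of the same rank $r$; since $\GG_a^k$ is a group, its only idempotent is the unit, so $E(S)\cong E(S_0)$, and it suffices to treat $S_0$. Renaming, I assume from now on that $S$ has zero. Let $T\cong\GG_m^r$ be the maximal torus and let $\overline T\subseteq S$ be its closure, a toric submonoid. The result of Putcha quoted just before the proposition (\cite{LLC}, in the commutative case) gives $E(S)=E(\overline T)$, so the whole problem is transported inside the toric monoid $\overline T$.

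Next I would count the idempotents of $\overline T$ by faces. As recalled above, for a toric monoid the $T$-orbits are in bijection with the faces of its weight cone (which coincides with $C(S)$), and each orbit carries exactly one idempotent (\cite{Neeb}). Hence $|E(S)|$ equals the number of faces of $C(S)$, and since a rational polyhedral cone has finitely many faces, the finiteness in~(1) follows at once. The cone $C(S)$ has dimension $r$; moreover, because $S$ has zero, Lemma~\ref{l1l} yields $\KK[S]^T=\KK$, i.e. the only $T$-invariant monomials are constants. Equivalently, $C(S)$ contains no line, so $C(S)$ is a \emph{pointed} full-dimensional cone in $M_\QQ\cong\QQ^r$.

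The statement now becomes purely combinatorial: a pointed $r$-dimensional cone has at least $2^r$ faces, with equality exactly for simplicial cones. For the lower bound I would induct on $r$, the cases $r=0,1$ being immediate. Choosing a facet $F$, which is itself a pointed cone of dimension $r-1$, its at least $2^{r-1}$ faces are faces of $C(S)$ lying in the supporting hyperplane $H$ spanned by $F$; intersecting with $H$ the faces of $C(S)$ not contained in $H$ surjects onto the faces of $F$, producing a further $\ge 2^{r-1}$ faces, for a total of $\ge 2^{r}$. (Alternatively one may simply invoke the standard fact that, in a fixed dimension, simplicial cones minimize the number of faces.) Equality for a simplicial cone is clear, since its faces correspond bijectively to subsets of its $r$ extreme rays. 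For part~(2) it then remains to observe that every pointed cone of dimension at most $2$, being a point, a ray, or the cone on two rays, is simplicial, so the number of faces is exactly $2^r$.

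The main obstacle is the general lower bound in~(1): proving that an \emph{arbitrary} pointed $r$-cone has at least $2^r$ faces amounts to the surjectivity step in the induction above (equivalently, to the minimality of simplicial cones), which is the only genuinely nontrivial point. Once the passage to faces of a pointed cone is secured, the finiteness assertion and the sharp equality for $r\le 2$ are both straightforward.
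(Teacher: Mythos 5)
Your proposal is correct and follows essentially the same route as the paper: both identify $E(S)$ with $E(\overline{T})$ via Putcha's result, count idempotents as faces of the pointed $r$-dimensional cone $C(S)$ through the orbit--face--idempotent correspondence for toric monoids, and conclude from the elementary fact that such a cone has at least $2^r$ faces, with exactly $2^r$ when $r\le 2$. The only differences are cosmetic: you spell out the reduction to a monoid with zero (Lemma~\ref{l2l}) and the pointedness of $C(S)$ via Lemma~\ref{l1l}, and you sketch the inductive proof of the face-count lower bound, all of which the paper compresses into a citation of Lemma~\ref{l2l} and a standard convex-geometric fact.
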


\begin{proof}
For a commutative monoid $S$ and its toric submonoid $\overline{T}$ the cones $C(S)$ and $C(\overline{T})$ coincide. Since the sets $E(S)$ and $E(\overline{T})$ coincide as well, the number of idempotents in $S$ equals the number of faces in $C(S)$. Lemma~\ref{l2l} shows that $C(S)$ is a pointed cone of dimension~$r$, so it contains at least $2^r$ faces. Finally, for $r\le 2$ the number of faces is $2^r$.
\end{proof}

\begin{remark}
We do not have an example of a commutative monoid of rank $r$ on $\AA^n$ with more than $2^r$ idempotents.
\end{remark}

The following observation generalizes a basic fact on finite-dimensional local algebras.

\begin{corollary} \label{q3q}
Let $S$ be a commutative algebraic monoid with zero of rank 1 on $\AA^n$.
Then any element of $S$ is either invertible or nilpotent.
\end{corollary}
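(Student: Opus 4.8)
The plan is to combine the exact count of idempotents for rank-$1$ monoids with the strong $\pi$-regularity established in Proposition~\ref{q1q}. First I would pin down the set of idempotents. Since $S$ has rank $r=1$ and possesses a zero, Proposition~\ref{us}(2) gives exactly $2^{1}=2$ idempotents. The unit $e$ and the zero $0$ are both idempotents, and they are distinct because $n\ge 1$ means $S$ is not a single point; hence $E(S)=\{e,0\}$.

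Next I would invoke Proposition~\ref{q1q} to obtain a positive integer $m$ such that $a^{m}$ is group-like for every $a\in S$. By the definition of a group-like element, the orbit $G(S)a^{m}$ then contains an idempotent, and by the previous step this idempotent is either $e$ or $0$.

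The argument now splits into two cases. If $e\in G(S)a^{m}$, there is $g\in G(S)$ with $ga^{m}=e$; since $S$ is commutative, this exhibits $a^{m}$ as invertible, and therefore $a$ is invertible as well, because $a\cdot(ga^{m-1})=ga^{m}=e$. If instead $0\in G(S)a^{m}$, there is $g\in G(S)$ with $ga^{m}=0$; multiplying by $g^{-1}$ and using that $0$ is absorbing yields $a^{m}=g^{-1}\cdot 0=0$, so $a$ is nilpotent. These two possibilities are exhaustive, which proves the claim.

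The only delicate point is the first step, namely that $E(S)$ contains nothing beyond $e$ and $0$; everything afterwards is a short formal manipulation inside the monoid. This is precisely where the sharp value $2^{r}$ of Proposition~\ref{us}(2), valid for $r\le 2$, is indispensable: without it one could only guarantee $|E(S)|\ge 2$, which would not rule out an additional idempotent orbit consisting of elements that are neither invertible nor nilpotent.
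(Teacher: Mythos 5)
Your proof is correct and follows the paper's own argument: the paper likewise deduces from Proposition~\ref{us} that $E(S)=\{1,0\}$ and then concludes via Proposition~\ref{q1q}. You have merely spelled out the case analysis (orbit of $a^m$ containing $1$ versus $0$) that the paper leaves implicit, and your remark that the sharp count $2^r$ for $r\le 2$ is the essential input is exactly right.
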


\begin{proof}
By Proposition~\ref{us}, the only idempotents in $S$ are $1$ and $0$. The assertion follows from Proposition~\ref{q1q}.
\end{proof}


\end{document}